\newtheorem{lemma}{Lemma}
\newtheorem{theorem}{Theorem}
\newtheorem*{theorem*}{Theorem}
\newcommand{\Z}{\mathbb Z}
\newcommand{\Q}{\mathbb Q}
\newcommand{\C}{\mathbb C}
\newcommand{\oc}{\rho}
\newcommand{\K}{K}
\newcommand{\acr}{\newline\indent}
\begin{document}

\title[Diophantine triples in recurrences of Pisot type]{Diophantine triples in linear recurrence sequences of Pisot type}

\author[C. Fuchs, C. Hutle\and F. Luca]{Clemens Fuchs*$^\dag$, Christoph Hutle* \and Florian Luca$^\ddag$}

\address{\llap{*\,}University of Salzburg \acr Hellbrunner Str. 34/I \acr 5020 Salzburg \acr AUSTRIA}
\email{clemens.fuchs@sbg.ac.at, christoph.hutle@gmx.at}

\address{\llap{$^\ddag$\,}School of Mathematics\acr University of the Witwatersrand\acr Private Bag X3, Wits 2050, South Africa\acr Max Planck Institute for Mathematics\acr  Vivatsgasse 7, 53111 Bonn, Germany\acr
Department of Mathematics\acr Faculty of Sciences\acr University of Ostrava\acr 30 Dubna 22, 701 03 Ostrava 1, Czech Republic}
\email{florian.luca@wits.ac.za}

\thanks{C.F. and C.H. were supported by FWF (Austrian Science Fund) grant No. P24574 and by the Sparkling Science project EMMA grant No. SPA 05/172. F.L. was supported in part by grant CPRR160325161141 and an A-rated scientist award both from the NRF of South Africa and by grant No. 17-02804S of the Czech Granting Agency.}
\thanks{$^\dag$Corresponding author.}
\subjclass[2010]{ Primary 11D72, 11B39; Secondary 11J87}
\keywords{Diophantine triples, linear recurrence sequences, Diophantine equations, application of the Subspace theorem}

\maketitle

\begin{abstract}
The study of Diophantine triples taking values in linear recurrence sequences is a variant of a problem going back to Diophantus of Alexandria which has been studied quite a lot in the past. The main questions are, as usual, about existence or finiteness of Diophantine triples in such sequences. Whilst the case of binary recurrence sequences is almost completely solved, not much was known about recurrence sequences of larger order, except for very specialized generalizations of the Fibonacci sequence. Now, we will prove that any linear recurrence sequence with the Pisot property contains only finitely many Diophantine triples, whenever the order is large and a few more not very restrictive conditions are met.
\end{abstract}

\section{Introduction}
\label{sec:1}

The problem of Diophantus of Alexandria about tuples of integers $\{a_1, a_2, a_3,$ $\dots,a_m\}$ such that the product of each distinct two of them plus 1 always results in an integer square has already quite a long history (see \cite{dujella-webpage}). One of the main questions was, how many such Diophantine $m$-tuples exist for a fixed $m \geq 3$.
Already Euler proved that there are infinitely many Diophantine quadruples, demonstrating it with the family
$$
\{a, b, a+b+2 \sqrt{ab+1}, 4(a+ \sqrt{ab+1})(b+ \sqrt{ab+1}) \sqrt{ab+1}\}
$$
for $a$ and $b$ such that $ab + 1$ is a perfect square. However no algorithm for generating all quadruples has been found.

Much later Arkin, Hoggatt and Strauss \cite{AHS} proved that every Diophantine triple can be extended to a Diophantine quadruple. More precisely, let $\{a, b, c\}$ be a Diophantine triple and
$$
ab + 1 = r^2, \quad ac + 1 = s^2, \quad bc + 1 = t^2,
$$
where $r,s,t$ are positive integers. Define
$$
d_+ := a + b + c + 2abc + 2rst.
$$
Then $\{a, b, c, d_+\}$ is a Diophantine quadruple.

Dujella proved in \cite{Du}, that there are no Diophantine sextuples and also that there are only finitely many Diophantine quintuples. This result is even effective, since an upper bound of the form $\log_{10}(\log_{10}(\max\{{a_i}\})) < 26$ was given on the members of such a quintuple. It is conjectured, that there are no quintuples at all and, even stronger, that if $\{a, b, c, d\}$ is a Diophantine quadruple and $d > \max\{a, b, c\}$, then $d = d_+$. The ``weaker" conjecture has recently been settled by He, Togb\'e and Ziegler (cf. \cite{htz}), whereas the stronger conjecture still remains open. Here, Fujita and Miyazaki \cite{FM} proved that any fixed Diophantine triple can be extended to a Diophantine quadruple in at most 11 ways by joining a fourth element exceeding the maximal element in the triple.

Now it is an interesting variation of the original problem of Diophantus to consider a linear recurrence sequence instead of the sequence of squares. So we ask for bounds $m$ on the size of tuples of integers $\{a_1, a_2, a_3, \dots, a_m\}$ with $a_i a_j + 1$ being members of a given linear recurrence for $1\leq i<j\leq m$.
Here, the first result was due to Fuchs, Luca and Szalay, who proved in \cite{FuLuSz} that for a binary linear recurrence sequence $(u_n)_{n \geq 0}$, there are only finitely many Diophantine triples, if certain conditions are met. The Fibonacci sequence and the Lucas sequence both satisfy these conditions and all Diophantine triples with values in these sequences were computed in \cite{LuSz} and \cite{LuSz1}. Further results in this direction can be found in \cite{aiz2}, \cite{ISz1} and \cite{LuMu}. Moreover, in \cite{aiz1} it is shown that there are no balancing Diophantine triples; see also \cite{aiz3} for a related result. In \cite{ai} it is shown that there are no Diophantine triples taking values in Pellans sequence.

The first result on linear recurrence sequences of higher order than 2 came up in 2015, when the authors jointly with Irmak and Szalay proved (see \cite{FuLuSz1}) that there are only finitely many Diophantine triples with values in the Tribonacci sequence $(T_n)_{n \geq 0}$ given by
$$
T_0 =T_1 =0, \quad T_2 =1, \quad T_{n+3} =T_{n+2}+T_{n+1}+T_n \quad \textrm{for } n \geq 0.
$$
In \cite{GoLu} it was shown that a Tribonacci Diophantine quadruple does not exist. A related result can be found in \cite{RL1}.
One year later in \cite{FuLuSz2}, this result was generalized to $k$-generalized Fibonacci sequences: For any integer $k \geq 3$, define $(F_n^{(k)})_{n \geq 0}$ by $F_0^{(k)}=\ldots=F_{k-2}^{(k)}=0,F_{k-1}^{(k)}=1$ and
$$
F_{n+k}^{(k)}=F_{n+k-1}^{(k)}+\cdots+F_n^{(k)} \quad \textrm{for } n \geq 0.
$$
Then for any fixed $k$, only finitely many Diophantine triples with values in $\{F_n^{(k)}; n\geq 0\}$ exist.
None of these results are constructive, since the proof uses a version of the Subspace theorem. It is not clear, whether there are any Diophantine triples with values in those sequences at all.


The result in this paper deals with a significantly larger class of linear recurrence sequences:

Let $(F_n)_{n\geq 0}$ be a sequence of integers satisfying a linear recurring relation. Assume that the recurrence is of \emph{Pisot type}; i.e., that its characteristic polynomial is the minimal polynomial (over $\mathbb{Q}$) of a Pisot number. We denote the power sum representation (Binet formula) by $F_n = f_1 \alpha_1^n + \cdots + f_k \alpha_k^n$. Assume w.l.o.g. that $\alpha=\alpha_1$ is the Pisot number; i.e., $\alpha$ is a real algebraic integer of degree $k$ with $\vert\alpha\vert>1$ and $\alpha_2,\ldots,\alpha_k$ are the conjugates of $\alpha$ over $\mathbb{Q}$ and satisfy $\max\{|\alpha_2|, \dots, |\alpha_k|\} < 1$. We remark that by a result of Mignotte (cf. \cite{MM}) it immediately follows that the sequence is non-degenerate, and that the characteristic roots are all simple and irrational.

Then we will show, that there are only finitely many triples of integers $1\le a<b<c$ such that
\begin{displaymath}
1+ab=F_x,\quad 1+ac=F_y,\quad 1+bc=F_z
\end{displaymath}
if at least one of the following conditions hold:
\begin{itemize}
\item Neither the leading coefficient $f_1$ nor $f_1\alpha$ is a square in $\K=\mathbb{Q}(\alpha_1, \dots, \alpha_k)$.
\item $k \geq 5$ and $\alpha$ is not a unit in the ring of integers of $\K$.
\item $k \geq 6$.
\end{itemize}

The previously treated $k$-generalized Fibonacci sequences satisfy this Pisot property and neither their leading coefficient $f_1$ nor $f_1\alpha_1$  is a square. However, the  new result in this paper helps us to obtain finiteness for many more linear recurrence sequences.

For example, let us consider the irreducible polynomial $X^3 - X - 1$, which has the Pisot property. Its Pisot root $\theta := 1.3247179572 \dots$ is the smallest existing Pisot number by \cite{MJB}. This number is also known as the \emph{plastic constant}.
Its corresponding linear recurrence sequence $(F_n)_{n \geq 0}$, given by $F_{n+3} = F_{n+1} + F_n$, is of Pisot type. If the initial values are not $F_0 = 6, F_1 = -9, F_2 = 2$, then neither the leading coefficient nor the leading coefficient times $\theta$ are squares in the splitting field of $X^3-X-1$ over $\mathbb{Q}$. So the theorem can be applied and we obtain, that there are only finitely many Diophantine triples with values in this sequence.
However it is yet not clear, what happens in the case $F_0 = 6, F_1 = -9, F_2 = 2$.

Another example for which the theorem can be applied is the polynomial
$$
X^{2k+1} - \frac{X^{2k}-1}{X - 1}.
$$
This polynomial defines a Pisot number of degree $2k+1$ by a result of Siegel (see \cite{CLS}) and its corresponding linear recurrence sequence is of Pisot type. Independently of its initial values, the result applies to all $k \geq 3$ since the degree is sufficiently large.
The same applies to
$$
X^{2k+1} - \frac{X^{2k+2}-1}{X^2 - 1},
$$
for $k \geq 3$.

Furthermore, all polynomials of the form
$$
X^k(X^2-X-1) + X^2 + 1
$$
are known to be Pisot numbers. So, again for $k \geq 3$ the theorem applies.


We quickly discuss the main shape of the recurrences we study in this paper. Let $(F_n)_{n\geq 0}$ be a recurrence of Pisot type as described above. Put $\K=\mathbb{Q}(\alpha_1,\ldots,\alpha_k)$. Since $F_n\in\Z$ it follows that each element of the Galois group of $K$ over $\Q$ permutes the summands in the power sum representation of $F_n$. Moreover, each summand is a conjugate of the leading term $f_1\alpha_1^n$ over $\Q$ and each conjugate of it appears exactly once in the Binet formula.
Therefore $F_n$ is just the trace $\operatorname{Tr}_{K/\mathbb{Q}}(f_1 \alpha_1^n)$.
Since $f_1$ might not be integral, we write $f_1 = f/d$ with $d \in \Z$ and $f$ being an integral element in $K$.
Thus, conversely starting with a Pisot number $\alpha$, an integer $d \in \Z$ and an integral element $f$ in the Galois closure $K$ of $\alpha$ over $\Q$ such that $dF_n=\mbox{Tr}_{K/\Q}(f\alpha^n)$ for every $n\in\mathbb{N}$, we can easily construct further examples for which our result applies.


The proof will be given in several steps: First, a more abstract theorem is going to be proved, which guarantees the existence of an algebraic equality, that needs to be satisfied, if there were infinitely many Diophantine triples. This works on utilizing the Subspace theorem and a parametrization strategy in a similar manner to that of \cite{FuLuSz2}. If the leading coefficient is not a square in $\mathbb{Q}(\alpha_1, \dots, \alpha_k)$, we obtain the contradiction quite immediately from this equality.
In a second step, we will use B\'ezout's theorem, suitable specializations and algebraic parity considerations in order to show that this equality can also not be satisfied if the order $k$ is large enough.
Let us now state the results.

\section{The results}

We start with a general and more abstract statement which gives necessary conditions in case infinitely many Diophantine triples exist. It is derived by using the Subspace theorem.

\begin{theorem}
\label{th:main1}
Let $(F_n)_{n\geq 0}$ be a sequence of integers satisfying a linear recurrence relation of Pisot type of order $k\geq 2$. Denote its power sum representation as
$$
F_n = f_1 \alpha_1^n + \cdots + f_k \alpha_k^n.
$$
If there are infinitely many positive integers $1 < a < b < c$, such that
\begin{equation}
\label{eq:1}
ab+1=F_x,\qquad ac+1=F_y,\qquad bc+1=F_z
\end{equation}
hold for integers $x,y,z$, then one can find fixed integers $(r_1,r_2,r_3,s_1,s_2,s_3)$ with $r_1,r_2,r_3$ positive, $\gcd(r_1,r_2,r_3)=1$ such that infinitely many of the solutions $(a,b,c,x,y,z)$ can be parametrized as
$$
x=r_1 \ell +s_1,\quad y=r_2 \ell +s_2,\quad z=r_3 \ell+s_3.
$$
Furthermore, following the parametrization of $x, y, z$ in $\ell$, there must exist a power sum $C_\ell$ of the form
$$
C_\ell = \alpha_1^{(-r_1+r_2+r_3)\ell + \eta} \left(e_0+\sum_{j\in J_C} e_j \prod_{i=1}^k \alpha_i^{v_{ij} \ell} \right)
$$
with $\eta \in \mathbb{Z} \cup (\mathbb{Z} + 1/2)$, $J_C$ an index set, $e_j$ being coefficients in $\mathbb{Q}(\alpha_1, \ldots, \alpha_k)$ and integers $v_{ij}$ with the property that $v_{ij} \geq 0$ if $i \in \{2, \dots, n\}$ and $v_{ij} < 0$ if $i = 1$,
such that
$$
(F_x - 1) C_\ell^2 = (F_y - 1) (F_z - 1).
$$
Similarly there are $A_\ell$ and $B_\ell$ of the same shape with
$$
(F_z - 1) A_\ell^2 = (F_x - 1) (F_y - 1) \quad \textrm{and} \quad (F_y - 1) B_\ell^2 = (F_x - 1) (F_z - 1)
$$
\end{theorem}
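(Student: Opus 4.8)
The plan is to deduce, from a hypothetical infinite family of solutions, a linear parametrization of the index triples together with the three power-sum identities, by means of the Subspace theorem and a square-root expansion of the pertinent ratios of Binet formulas, in the spirit of \cite{FuLuSz2}. First I would record the elementary algebra behind \eqref{eq:1}: eliminating the unknowns pairwise yields the exact relations
\[
c^2(F_x-1)=(F_y-1)(F_z-1),\quad b^2(F_y-1)=(F_x-1)(F_z-1),\quad a^2(F_z-1)=(F_x-1)(F_y-1).
\]
Since $1<a<b<c$ we have $F_x<F_y<F_z$, and because $\alpha_1$ dominates the remaining characteristic roots this forces $x<y<z$ and $x\to\infty$ along the family. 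Writing $F_n-1=f_1\alpha_1^n(1+\delta_n)$ with $\delta_n\to0$, these relations give $2\log c=(y+z-x)\log|\alpha_1|+O(1)$ together with the two companion estimates, and the positivity of $a$ forces $z\le x+y+O(1)$; with $x<y<z$ this keeps the three indices comparable and provides the separation of ``dominant'' and ``negligible'' terms that the Subspace theorem requires.

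Next I would substitute the Binet formulas into one relation, say $a^2(F_z-1)=(F_x-1)(F_y-1)$, and expand it into a vanishing $\K$-linear combination of the monomials $a^2\alpha_i^z$, $a^2$, $\alpha_i^x\alpha_j^y$, $\alpha_i^x$, $\alpha_j^y$, $1$ (for $1\le i,j\le k$); treating the two companions the same way produces a fixed finite system of linear forms, evaluated at the point whose coordinates are those monomials. Using the size estimates above — and the fact that $a,b,c$ are rational integers of archimedean size $|\alpha_1|^{O(\max\{x,y,z\})}$ with $|a|_v,|b|_v,|c|_v\le1$ at every finite place — I would check that, over the finite set $S$ of relevant places (the archimedean ones together with those dividing the $\alpha_i$ and the $f_i$), the product of the local norms of these forms at the solution point is $\le H^{-\varepsilon}$ for a fixed $\varepsilon>0$, $H$ the height of the point; the Subspace theorem then confines all but finitely many solutions to a finite union of proper subspaces, and a pigeonhole argument puts infinitely many of them into one of these, i.e. forces a single fixed nontrivial relation among the monomials for infinitely many $(x,y,z)$. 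As such a relation can hold for an infinite set of integer triples only if they lie in finitely many cosets of subgroups of $\Z^3$, and the indices are comparable and tend to infinity, $(x,y,z)$ must run through one arithmetic progression $x=r_1\ell+s_1$, $y=r_2\ell+s_2$, $z=r_3\ell+s_3$ with the $r_i>0$ and, after dividing out, $\gcd(r_1,r_2,r_3)=1$; I then restrict to this infinite subfamily.

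It remains to build the power sums. Along the progression, the Pisot property makes each $\delta_n$ (for $n\in\{x,y,z\}$) a convergent series $\sum_j c_j\prod_{i=1}^k\alpha_i^{w_{ij}\ell}$ with $w_{1j}<0$ and $w_{ij}\ge0$ for $i\ge2$; expanding $(1+\delta_x)^{-1}$ geometrically, the same holds for $(1+\delta_y)(1+\delta_z)(1+\delta_x)^{-1}$. Hence $c^2=(F_y-1)(F_z-1)/(F_x-1)$ equals $f_1$ times a fixed power of $\alpha_1$ linear in $\ell$ times $1+(\text{such a series})$, and, taking the branch of the square root positive for large $\ell$ and expanding $(1+t)^{1/2}$ by the binomial series, one obtains $c=C_\ell$ of exactly the asserted shape: a leading factor $\alpha_1^{(-r_1+r_2+r_3)\ell+\eta}$ with $\eta\in\Z\cup(\Z+1/2)$ (the half-integer arising from the parity constraints forced by squareness and the branch choice), times $e_0+\sum_{j\in J_C}e_j\prod_i\alpha_i^{v_{ij}\ell}$ with $e_j\in\K$, $v_{1j}<0$ and $v_{ij}\ge0$ for $i\ge2$. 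By construction $(F_x-1)C_\ell^2=(F_y-1)(F_z-1)$ for all large $\ell$, hence — both sides being exponential sums in $\ell$ — identically; permuting the roles of $x,y,z$ produces $A_\ell$ and $B_\ell$ with the remaining two identities.

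The main obstacle is the Subspace-theorem step: choosing the linear forms and the set $S$ so that the product of local norms genuinely falls below $H^{-\varepsilon}$. This needs the first-step size estimates to be sharp and — since $a,b,c$ appear among the monomials and are pinned down only up to the sign of a square root — a careful accounting, at every place of $\K$, of the heights and valuations of $a,b,c$, including lower bounds for whatever terms must play the role of the ``error''. A secondary technicality, in the last step, is to verify that the binomial square root of the ratio of Binet formulas really does converge, as a series of the stated type, for all sufficiently large $\ell$, so that the resulting identity for $C_\ell$ is a genuine algebraic identity rather than merely an asymptotic one.
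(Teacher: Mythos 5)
Your overall architecture matches the paper's (size estimates, Subspace theorem, reduction to an arithmetic progression, then a binomial square-root expansion yielding $C_\ell$), but there is a genuine gap at exactly the point you flag as the main obstacle. The estimate you derive, $z\le x+y+O(1)$ together with $x<y<z$, gives $y>z/4$ but says nothing about $x$ versus $z$: it is perfectly consistent with $x=o(z)$. The comparability $x\gg z$ is indispensable, because the quantity that must beat the height $\mathcal{H}({\bf y})\approx \alpha_1^{Cz}$ in the Subspace inequality is the truncation error of the square-root expansion, which is only $O(\alpha_1^{-(T+1)x})$; if $x/z\to 0$, no fixed truncation order $T$ and no fixed $\delta>0$ will do. The paper closes this hole with a separate ingredient (Lemma \ref{lem:1}), a norm computation showing $\gcd(F_y-1,F_z-1)<C_1\alpha_1^{kz/(k+1)}$; since $F_z-1$ divides the product of the two gcds $\gcd(F_x-1,F_z-1)\cdot\gcd(F_y-1,F_z-1)$, this forces $F_x-1\ge (F_z-1)/\gcd(F_y-1,F_z-1)\gg \alpha_1^{z/(k+1)}$ and hence $x>C_9z$. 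Nothing in your proposal supplies this. (Your assertion that $x\to\infty$ also needs the pigeonhole-plus-dominant-root argument of Lemma \ref{le:2}, not merely the dominance of $\alpha_1$, but that is recoverable.)

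A second defect is the way you set up the Subspace theorem: you propose to evaluate linear forms at the point whose coordinates are the monomials occurring in the exact identity $a^2(F_z-1)=(F_x-1)(F_y-1)$. But that identity says precisely that these points all lie in one fixed proper hyperplane, so the conclusion of the Subspace theorem is tautological there and yields no new relation; moreover $a^2$ is not an $S$-unit, so the non-degenerate $S$-unit machinery does not apply to that relation directly either. The way out, and what the paper actually does, is to run the square-root expansion of $c=\sqrt{(F_y-1)(F_z-1)/(F_x-1)}$ \emph{before} the Subspace step: truncating the expansion makes the distinguished linear form small but nonzero (equal to the tail), the Subspace theorem then upgrades the asymptotic expansion to an exact finite expression for $c$ (and likewise for $a$, $b$), and only afterwards does substituting these exact expressions into $1+bc=F_z$ produce a genuine $S$-unit equation, from which the theorem on non-degenerate solutions yields the linear parametrization of $(x,y,z)$ and then the functional identity for $C_\ell$.
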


The proof is given in Sections \ref{sec:2}, \ref{sec:4} and \ref{sec:5}.

This theorem looks quite abstract. However, it can be applied to a huge family of linear recurrences.
Firstly, it can be applied to all linear recurrences, in which the leading coefficient is not a square:
\begin{theorem}\label{co:nonsquare}
Let $(F_n)_{n\geq 0}$ be a linear recurrence sequence of integers with
$$
F_n = f_1 \alpha_1^n + \cdots + f_k \alpha_k^n
$$
which satisfies the conditions of Theorem \ref{th:main1}. If furthermore neither $f_1$ nor $f_1 \alpha_1$ are squares in $\mathbb{Q}(\alpha_1, \dots, \alpha_k)$, then there are only finitely many Diophantine triples with values in $\{F_n; n\geq 0\}$.
\end{theorem}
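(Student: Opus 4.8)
The plan is to argue by contradiction. Assume that $\{F_n\colon n\ge 0\}$ contains infinitely many Diophantine triples. Then the hypotheses of Theorem~\ref{th:main1} are met, so after passing to the infinite subfamily it provides we obtain the parametrization $x=r_1\ell+s_1$, $y=r_2\ell+s_2$, $z=r_3\ell+s_3$ with $r_1,r_2,r_3>0$, together with a power sum $C_\ell$ of the displayed shape satisfying
$$
(F_x-1)\,C_\ell^2=(F_y-1)(F_z-1)
$$
for all $\ell$ in that family. Both sides are generalized power sums in the single variable $\ell$, so agreeing on an infinite set of integers forces them to be identically equal (a nonzero generalized power sum has only finitely many integer zeros). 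The contradiction will come from comparing the dominant terms of the two sides of this identity.

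Since $\alpha=\alpha_1$ is a Pisot number, $|\alpha_1|>1>|\alpha_i|$ for $2\le i\le k$; combined with $r_1,r_2,r_3>0$ this shows that in each factor $F_{r_j\ell+s_j}-1=f_1\alpha_1^{s_j}(\alpha_1^{r_j})^\ell+(\text{smaller terms})$ the leading term is $f_1\alpha_1^{s_j}(\alpha_1^{r_j})^\ell$. In the bracket defining $C_\ell$, on the other hand, every summand other than the constant $e_0$ carries $\alpha_1$ to a strictly negative multiple of $\ell$ and all of $\alpha_2,\dots,\alpha_k$ to nonnegative multiples, hence tends to $0$; consequently $C_\ell$ is asymptotic to a single term $e\,(\alpha_1^{\nu})^{\ell}$ with $\nu\in\tfrac12\Z$ and leading coefficient $e\in K=\mathbb{Q}(\alpha_1,\dots,\alpha_k)$ — for the latter one may need to restrict $\ell$ to a fixed residue class modulo $2$ in order to absorb the half-integer part of the exponent into an honest power of $\alpha_1$, which is exactly why Theorem~\ref{th:main1} permits $\eta\in\Z\cup(\Z+\tfrac12)$. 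One should also check that $C_\ell$ does not vanish identically, so that such a leading term genuinely exists; this is guaranteed by the construction behind Theorem~\ref{th:main1}.

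Feeding these asymptotics into the identity $(F_x-1)C_\ell^2=(F_y-1)(F_z-1)$ and equating exponential growth rates and leading coefficients, one gets $r_1+2\nu=r_2+r_3$ and $f_1\alpha_1^{s_1}e^2=f_1^{2}\alpha_1^{s_2+s_3}$, hence
$$
e^2=f_1\,\alpha_1^{\,m},\qquad m:=s_2+s_3-s_1\in\Z .
$$
Multiplying through by an even power of $\alpha_1$ we conclude that either $f_1$ (if $m$ is even) or $f_1\alpha_1$ (if $m$ is odd) is a square in $K$, contrary to hypothesis. Therefore only finitely many Diophantine triples take values in $\{F_n\}$. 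The extraction of dominant terms is routine once the Pisot inequalities are in hand; the only point that requires genuine care is the parity/leading-coefficient bookkeeping — making sure the extracted leading coefficient $e$ really lies in $K$ (rather than only in $K(\sqrt{\alpha_1})$) and that one lands on exactly the dichotomy ``$f_1$ or $f_1\alpha_1$ is a square'' rather than on something weaker.
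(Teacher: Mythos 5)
Your proposal follows essentially the same route as the paper's proof: invoke Theorem \ref{th:main1}, treat $(F_x-1)C_\ell^2=(F_y-1)(F_z-1)$ as an identity of power sums in $\ell$, compare dominant terms, and arrive at $e^2=f_1\alpha_1^{m}$ with $m\in\Z$, so that $f_1$ or $f_1\alpha_1$ is a square in $\K$ according to the parity of $m$. The one step you treat too casually is the assertion that $C_\ell$ is asymptotic to a single term $e\,\alpha_1^{\nu\ell}$: this requires the \emph{constant term} $e_0$ of the bracket to be nonzero, which is strictly stronger than your stated safeguard that $C_\ell$ does not vanish identically (all the summands $e_jM_j$ tend to $0$, but they need not be absent, so $C_\ell$ can be nonzero with $e_0=0$, and then its leading term is a mixed monomial in $\alpha_1,\dots,\alpha_k$ rather than a pure power of $\alpha_1$). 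The paper disposes of this case by a separate growth comparison: if $e_0=0$ the whole bracket tends to $0$, so $(F_x-1)C_\ell^2$ is $o\bigl((F_y-1)(F_z-1)\bigr)$, which is incompatible with the identity; only then does it conclude $e_0\neq 0$ and read off $e_0^2=f_1\alpha_1^{\epsilon}$. This dichotomy is implicit in your phrase ``equating exponential growth rates,'' but it needs to be said explicitly; once it is, your argument is complete, and your parity bookkeeping (squaring first so that only $\alpha_1^{2\eta}\in\K$ is needed, then shifting by an even power of $\alpha_1$) is exactly the right way to land on the stated dichotomy.
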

The proof of this theorem is given in Section \ref{sec:nosquare}.

Another consequence of Theorem \ref{th:main1} applies to linear recurrences of sufficiently large order. Namely if $k \geq 6$, the existence of such a $C_\ell$ leads to a contradiction. The same holds already for $k = 5$, if we assume that the Pisot element $\alpha_1$ is not a unit in the ring of integers of $\mathbb{Q}(\alpha_1,\ldots,\alpha_k)$.
Thus, we obtain the following result.

\begin{theorem}\label{th:main2}
Let $(F_n)_{n\geq 0}$ be a linear recurrence sequence of integers which satisfies the properties of Theorem \ref{th:main1}.
Then there are only finitely many Diophantine triples $1<a<b<c$ with
$$
ab+1=F_x,\quad ac+1=F_y,\quad bc+1=F_z,
$$
with values in $\{F_n; n\geq 0\}$ if one of the following conditions are satisfied:
\begin{itemize}
\item[(i)] $k\ge 5$ and $\alpha$ is not a unit,
\item[(ii)] $k\ge 6$.
\end{itemize}
\end{theorem}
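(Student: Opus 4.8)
The plan is to assume, for contradiction, that there are infinitely many Diophantine triples $1<a<b<c$ with $ab+1=F_x$, $ac+1=F_y$, $bc+1=F_z$, and to feed this into Theorem \ref{th:main1}. That theorem produces integers $(r_1,r_2,r_3,s_1,s_2,s_3)$ with $r_1,r_2,r_3>0$, $\gcd(r_1,r_2,r_3)=1$, a parametrization $x=r_1\ell+s_1$, $y=r_2\ell+s_2$, $z=r_3\ell+s_3$ holding along an infinite set of $\ell$, and power sums $A_\ell,B_\ell,C_\ell$ of the stated shape satisfying the three identities $(F_x-1)C_\ell^2=(F_y-1)(F_z-1)$, $(F_z-1)A_\ell^2=(F_x-1)(F_y-1)$, $(F_y-1)B_\ell^2=(F_x-1)(F_z-1)$. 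Since these are identities between power sums in $\ell$ that hold for infinitely many $\ell$, they hold identically as functions of $\ell$ (viewing $\alpha_i^\ell$ as independent variables, or invoking that a nonzero exponential polynomial has finitely many zeros). The strategy is then to compare the dominant terms and, more importantly, the structure of the full exponential-polynomial identity for $C_\ell$ to derive a constraint on $k$.

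The first step is to write out the three factors explicitly. From the Binet formula and the parametrization, $F_x-1=f_1\alpha_1^{r_1\ell+s_1}+\cdots+f_k\alpha_k^{r_1\ell+s_1}-1$, and similarly for $F_y-1$, $F_z-1$. I would normalize by pulling out the dominant term $\alpha_1$: since $|\alpha_1|>1$ and $|\alpha_i|<1$ for $i\ge 2$, the quantity $F_x-1$ is, up to the factor $f_1\alpha_1^{r_1\ell+s_1}$, of the form $1+(\text{small terms in }\ell)$, where "small" means involving only $\alpha_2,\ldots,\alpha_k$ (to positive powers of $\ell$) and $\alpha_1$ (to negative powers of $\ell$, coming from the $-1$). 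Substituting the shape of $C_\ell$ given in Theorem \ref{th:main1} into $(F_x-1)C_\ell^2=(F_y-1)(F_z-1)$ and matching the $\alpha_1$-exponent of the dominant term forces a linear relation among $r_1,r_2,r_3$ and the exponent $\eta$; this is already encoded in the exponent $(-r_1+r_2+r_3)\ell+\eta$ appearing in $C_\ell$, and gives $2\eta + s_1 = s_2+s_3$ type relations plus the dominant-coefficient equation $f_1 e_0^2=f_1^2$, i.e. $e_0^2=f_1$ — which is exactly where Theorem \ref{co:nonsquare} got its contradiction. Here we cannot use that, so instead I would look one level deeper: the identity, after dividing by the dominant monomial, reads
\begin{equation*}
\Bigl(1+\sum_{i\ge 2}\tfrac{f_i}{f_1}\alpha_i^{x}\alpha_1^{-x}-\tfrac{1}{f_1}\alpha_1^{-x}\Bigr)\Bigl(e_0+\sum_{j\in J_C}e_j\prod_i\alpha_i^{v_{ij}\ell}\Bigr)^2 = \Bigl(\cdots\Bigr)_y\Bigl(\cdots\Bigr)_z,
\end{equation*}
and I would expand both sides as exponential polynomials in the multiplicatively independent quantities $\alpha_1^{-\ell},\alpha_2^{\ell},\ldots,\alpha_k^{\ell}$ (using that the $\alpha_i$ are the simple irrational conjugates of a Pisot number, so there are no hidden multiplicative relations beyond $\alpha_1\cdots\alpha_k=\pm\mathrm{Nm}(\alpha)$, the norm).

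The decisive step is a dimension count. The right-hand side $(F_y-1)(F_z-1)$, divided by its dominant monomial, contains the "pure" terms $\frac{f_i}{f_1}\alpha_i^y\alpha_1^{-y}$ and $\frac{f_i}{f_1}\alpha_i^z\alpha_1^{-z}$ for each $i\ge 2$, i.e. roughly $2(k-1)$ distinct subdominant exponentials, plus cross terms. On the left, $(F_x-1)$ contributes $(k-1)$ such subdominant exponentials, and $C_\ell^2$ contributes the exponentials $\prod_i\alpha_i^{v_{ij}\ell}$; because each $v_{ij}\ge 0$ for $i\ge 2$ and $v_{1j}<0$, and because $C_\ell$ is a \emph{square}, $C_\ell^2$ has a rigid structure (its Newton polytope is twice that of $C_\ell$, and its "square-root" constraint means the set of exponents is closed under the involution $j\mapsto$ "complementary exponent"). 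Matching the exponent \emph{vectors} appearing on the two sides — in particular tracking which monomials in $\alpha_2^\ell,\ldots,\alpha_k^\ell$ can occur — forces $J_C$ to be large, but the requirement that $e_0+\sum e_j\prod\alpha_i^{v_{ij}\ell}$ be a perfect square of an exponential polynomial of the \emph{same} shape is very restrictive. I expect this to reduce, via Bézout's theorem applied to the two plane-curve-like conditions coming from the $A_\ell,B_\ell,C_\ell$ identities (as the introduction hints: "we will use Bézout's theorem, suitable specializations and algebraic parity considerations"), to an inequality of the form $k-1 \le $ (something small like $3$ or $4$), contradicting $k\ge 6$; and when $\alpha$ is a unit one has an extra free multiplicative relation $\alpha_1\cdots\alpha_k=\pm 1$ that is unavailable when $\alpha$ is not a unit, which is precisely why the non-unit case already works at $k=5$. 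Concretely, for case (i) I would, after specializing $\ell$ or a subset of the variables $\alpha_i^\ell$ to generic values, view the normalized identity as a polynomial identity in two remaining variables and count intersection points: Bézout gives a bound on the number of monomials that can cancel, the square condition halves the available freedom, and the non-unit hypothesis removes the one degenerate family, yielding the contradiction for $k=5$; for case (ii) the cruder count with the extra variable already suffices at $k=6$.

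The main obstacle will be making the dimension/Bézout count rigorous: one must carefully enumerate the possible exponent vectors $(v_{1j},\ldots,v_{kj})$ compatible with $C_\ell$ being a square and with the product on the right, handle the cross terms in $(F_y-1)(F_z-1)$ and in $C_\ell^2$ without losing track of cancellations, and correctly exploit the Galois action (each $\alpha_i$ for $i\ge2$ is a conjugate of $\alpha_1$, so the $f_i$ are conjugates of $f_1$ and the whole identity is Galois-stable, which both constrains and simplifies the monomial bookkeeping). The parity considerations — whether certain exponents lie in $\Z$ or $\Z+\tfrac12$, reflecting the square roots $r,s,t$ with $ab+1=r^2$ etc. — are what distinguish the $f_1$ versus $f_1\alpha_1$ alternatives and must be threaded through; I would isolate these into a short lemma about when $e_0^2=f_1\alpha_1^{\eta'}$ can hold with $\eta'$ half-integral. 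Once the count is set up correctly, the conclusion that no such $C_\ell$ exists for $k\ge 6$ (resp. $k\ge5$, $\alpha$ not a unit) should follow, and combined with Theorem \ref{th:main1} this gives the finiteness claimed in Theorem \ref{th:main2}.
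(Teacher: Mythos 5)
There is a genuine gap at the heart of your proposal. Your setup is right (apply Theorem \ref{th:main1}, treat the resulting relation $(F_x-1)C_\ell^2=(F_y-1)(F_z-1)$ as an identity of exponential polynomials, and use the absence of multiplicative relations among the $\alpha_i$ to pass to a Laurent-polynomial identity), and you correctly intuit why the unit case costs one extra degree. But your decisive step --- a ``dimension count'' of subdominant exponentials leading to $k-1\le 3$ or $4$ --- is not carried out and, as sketched, cannot work: the index set $J_C$ and the exponent vectors $v_{ij}$ are not controlled a priori, so $C_\ell^2$ can in principle supply arbitrarily many monomials, and comparing the \emph{number} of exponentials on the two sides yields no contradiction. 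Moreover the logical role of $k$ is inverted in your sketch. In the actual argument, large $k$ is used \emph{constructively}, not to make the identity overdetermined: writing $P_i({\bf X})=\sum_{j=1}^k f_j\alpha_j^{s_i}X_j^{r_i}-1$ and $P_iP_j/P_h=Q_h^2$, one proves (this is the key lemma) that there is a specialization $(X_3,\ldots,X_k)=(x_3,\ldots,x_k)$ with all $x_j\ne 0$ such that $\sum_{j\ge 3}f_j\alpha_j^{s_i}x_j^{r_i}=1$ simultaneously for $i=1,2,3$. This is a system of three equations, so one needs three genuinely free variables $x_3,x_4,x_5$ --- hence $k\ge 5$ --- and B\'ezout's theorem enters precisely here: after homogenizing, the system has $r_1r_2r_3$ projective solutions, while the solutions on the hyperplane at infinity are counted (using the irreducibility of Fermat-type curves $aX^m+bY^m+c$) to be at most $3+r_1r_2<r_1r_2r_3$, so a solution with nonzero last coordinate exists. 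When $\alpha$ is a unit the relation $\alpha_1\cdots\alpha_k=\pm1$ forces one to identify $\alpha_2^\ell$ with $\pm 1/(X_1X_3\cdots X_k)$ and to impose the extra equation $x_3\cdots x_k=\pm1$, costing one more variable, whence $k\ge 6$.

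After this specialization the problem no longer involves $k$ at all: one is left with homogeneous binomials $P_i=f_1\alpha_1^{s_i}X_1^{r_i}+f_2\alpha_2^{s_i}X_2^{r_i}$, and the contradiction comes from root-counting in one variable (every root of $P_h$ must be a root of $P_iP_j$; the roots are $(-f_2/f_1)^{1/r_i}(\alpha_2/\alpha_1)^{s_i/r_i}$ times $r_i$-th roots of unity; counting common roots forces $(r_1,r_2,r_3)=(1,1,2)$ or $s_2=s_3$, both of which are excluded using the degree conditions $r_1+r_2+r_3\equiv 0\pmod 2$ and $r_1+r_2>r_3$ and Mignotte's theorem). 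None of this one-variable analysis, nor the preliminary derivation of the parity and triangle conditions on $(r_1,r_2,r_3)$, appears in your proposal; the ``square-root constraint halves the available freedom'' heuristic is not a substitute for it. So while your scaffolding and your reading of the unit/non-unit dichotomy are sound, the argument that actually rules out $C_\ell$ is missing.
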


This theorem is proved in Sections \ref{sec:56} and \ref{sec:7}.

\section{Some useful lemmas}
\label{sec:2}

Assume we have infinitely many solutions $(x,y,z) \in \mathbb{N}^{3}$ to \eqref{eq:1} with $1<a<b<c$. Obviously, we have $x < y < z$.
First, one notices that not only for $z$, but for all three components, we necessarily have arbitrarily ``large" solutions.

\begin{lemma}\label{le:2}
\label{lem:xlarge}
Let us assume, we have infinitely many solutions $(x,y,z) \in \mathbb{N}^{3}$ to \eqref{eq:1}. Then for each $N$, there are still infinitely many solutions $(x,y,z) \in \mathbb{N}^{3}$ with $x > N$.
\end{lemma}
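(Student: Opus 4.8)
The plan is to argue by contradiction. Suppose the conclusion fails, so that there is an $N$ for which all but finitely many of the infinitely many solution triples $(a,b,c)$ satisfy $x\le N$. Then some fixed value $x=x_0$ is attained by infinitely many of them, and along this infinite subfamily $ab=F_{x_0}-1$ is a fixed positive integer; since $2\le a<b$, there are only finitely many such factorizations, so after passing to a further infinite subfamily we may assume $a=a_0$ and $b=b_0$ are also fixed. As the triples $(a,b,c)$ are pairwise distinct while $a_0,b_0$ are fixed, the values of $c$ are pairwise distinct, hence $c\to\infty$ along the subfamily. Consequently $F_y=a_0c+1\to\infty$ and $F_z=b_0c+1\to\infty$, and since $|F_n|=|f_1\alpha_1^n+\cdots|\to\infty$ is bounded on every finite set of indices, this forces $y\to\infty$ and $z\to\infty$ as well.

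Next I would eliminate $c$ between the second and third equations. From $a_0c+1=F_y$ and $b_0c+1=F_z$ one gets
$$
b_0 F_y - a_0 F_z = b_0 - a_0 .
$$
Writing the Binet formula as $F_n=f_1\alpha_1^n+E_n$ with $E_n:=\sum_{i=2}^k f_i\alpha_i^n$, and using $|\alpha_i|<1$ for $i\ge 2$ so that $|E_n|\le\sum_{i=2}^k|f_i|$ is bounded by an absolute constant, this rearranges to
$$
f_1\alpha_1^{\,y}\bigl(b_0 - a_0\alpha_1^{\,z-y}\bigr) = (b_0-a_0) - b_0 E_y + a_0 E_z ,
$$
whose right-hand side is bounded independently of the solution. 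Since $|f_1\alpha_1^{\,y}|\to\infty$, the factor $b_0-a_0\alpha_1^{\,z-y}$ must tend to $0$; as $z-y\ge 1$ (recall $x<y<z$) and $\alpha_1>1$, this already forces $\alpha_1^{\,z-y}<(b_0+1)/a_0$ for all large solutions, so $z-y$ is bounded and hence equals some fixed $m\ge 1$ infinitely often. Along that sub-subfamily $b_0-a_0\alpha_1^{m}$ is a constant whose modulus is $\le M'/(|f_1|\,\alpha_1^{\,y})\to 0$, so it must vanish, i.e.\ $\alpha_1^{m}=b_0/a_0\in\Q$.

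The contradiction then comes from the irrationality built into the Pisot hypothesis: $\alpha=\alpha_1$ is an algebraic integer of degree $k\ge 2$ whose conjugates other than itself all have modulus $<1$, whereas if $\alpha_1^{m}$ were rational then $\alpha_1$ would be a root of $X^m-b_0/a_0$, all of whose roots share the modulus $|\alpha_1|>1$ — impossible. (The same observation shows $z>y$ strictly, which we may anyway take for granted.) Hence no such $m$ exists, and the assumption was false; for every $N$ there remain infinitely many solutions with $x>N$. The only real input here is this last irrationality fact; the main thing to be careful about is the bookkeeping of the successive passages to infinite subfamilies — fixing $x$, then $(a,b)$ — and checking that $c$, and therefore $y$ and $z$, are still unbounded afterwards, after which the elementary size estimate on $b_0F_y-a_0F_z$ finishes the argument.
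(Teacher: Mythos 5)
Your proof is correct and follows essentially the same route as the paper's: pigeonhole to fix $x$, then $a$ and $b$, eliminate $c$ to get $b_0F_y-a_0F_z=b_0-a_0$, and compare the dominant term $f_1\alpha_1^y$ against the bounded remainder to force $z-y$ constant and $\alpha_1^{z-y}=b_0/a_0$. The only divergence is the endgame --- the paper substitutes $b=a\alpha_1^{\rho}$ back into the full Binet identity and lets $y\to\infty$, while you conclude directly from the impossibility of $\alpha_1^{m}\in\Q$ (all roots of $X^m-b_0/a_0$ have modulus $|\alpha_1|>1$, contradicting the Pisot conjugates of modulus $<1$) --- and your justification that $z-y$ is bounded is, if anything, more carefully argued than the paper's.
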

\begin{proof}
It is obvious that we must have arbitrarily large solutions for $y$ and for $z$, since otherwise, $a, b, c$ would all be bounded as well, which is an immediate contradiction to our assumption.

If we had infinitely many solutions $(x,y,z)$ with $x < N$, then there is at least one fixed $x$ which forms a solution with infinitely many pairs $(y,z)$. Since $F_x = ab + 1$, we have a bound on these two variables as well and can use the same pigeon hole argument again to find fixed $a$ and $b$, forming a Diophantine triple with infinitely many $c \in \mathbb{N}$.

Using these fixed $a, b$, we obtain from the other two equations in \eqref{eq:1}, that $b F_y - a F_z = b - a$ and therefore, the expressions $b f_1 \alpha_1^y$ and $y f_1 \alpha_1^z$ (having the largest growth rate) must be equal. So
$$
\alpha_1^{z-y} = \frac{b}{a},
$$
which is a constant. Hence, $z-y$ must be some constant $\oc > 0$ as well and we can write $z = y + \oc$ for our infinitely many solutions in $y$ and $z$.

Using the power sum representations in $b F_y - a F_{y+\oc} = b - a$, we get
\begin{equation}\label{eq:xgrow}
b (f_1 \alpha_1^y + \cdots + f_k \alpha_k^y) - a (f_1 \alpha_1^{y+\oc} + \cdots + f_k \alpha_k^{y+\oc}) = b-a.
\end{equation}
So the terms with the largest growth rate, which are $b f_1 \alpha_1^y$ and $a f_1 \alpha_1^{y+\oc}$ must be equal and this gives us $b = a \alpha_1^c$. Inserting this into \eqref{eq:xgrow} and cancelling on both sides gives us
$$
\alpha_1^{\oc} (f_2 \alpha_2^y + \cdots + f_k \alpha_k^y) - (f_2 \alpha_2^{y+\oc} + \cdots + f_k \alpha_k^{y+\oc}) = \alpha_1^{\oc}-1.
$$
Now for $y \to \infty$, the left hand side converges to $0$. The right hand side is a constant larger than $0$. So this equality can not be true when $y$ is large enough. This contradiction completes the proof.
\end{proof}

Next, we prove the following result, which generalizes Proposition 1 in \cite{FuLuSz1}. Observe that the upper bound depends now on $k$.

\begin{lemma}
\label{lem:1}
Let $y<z$ be sufficiently large. Then there is a constant $C_1$ such that
\begin{equation}
\label{eq:gcd}
\gcd(F_y-1,F_z-1)< C_1 \alpha_1^{\frac{k}{k+1}z}.
\end{equation}
\end{lemma}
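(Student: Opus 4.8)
The plan is to argue by contradiction, in the spirit of Proposition~1 of \cite{FuLuSz1} (which is the case $k=3$, exponent $\tfrac34$). Assume $d:=\gcd(F_y-1,F_z-1)$ is large, say $d\ge C_1\alpha_1^{\frac{k}{k+1}z}$ for a constant $C_1$ to be fixed only at the end, and write $F_y-1=du$, $F_z-1=dv$ with coprime positive integers $u<v$ (indeed $u<v$ since $F_y=ac+1<bc+1=F_z$; also $f_1>0$, for otherwise $F_n<0$ for large $n$, contradicting that there are arbitrarily large solutions). From $F_n=f_1\alpha_1^n+o(\alpha_1^n)$ one gets $v\le 2f_1\alpha_1^z/d\le \tfrac{2f_1}{C_1}\alpha_1^{\frac1{k+1}z}$; and $F_y-1=du\ge d$ forces $\alpha_1^y\gg C_1\alpha_1^{\frac{k}{k+1}z}$, hence $y\ge\tfrac{k}{k+1}z$ once $C_1\ge 2f_1$. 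In particular $w:=z-y$ satisfies $1\le w\le\tfrac1{k+1}z$, and $y,z$ are large.

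Next I would convert the divisibility into a very sharp Diophantine approximation. The relation $v(F_y-1)=u(F_z-1)$ gives the exact identity $vF_y-uF_z=v-u$. Substituting the Binet formulas and keeping only the dominant terms $vf_1\alpha_1^y$ and $uf_1\alpha_1^z$, every remaining term is bounded by $(u+v)\max_{i\ge2}|f_i|\cdot|\alpha_i|^{\min(y,z)}\ll v$, using $|\alpha_i|<1$ (the Pisot property) and $y$ large. Dividing $|vf_1\alpha_1^y-uf_1\alpha_1^z|\ll v$ by $f_1\alpha_1^y$ and inserting the size bounds gives
\[
\bigl|\,v-u\alpha_1^{\,w}\,\bigr|\ \ll\ \frac{v}{\alpha_1^{\,y}}\ \ll\ \frac{1}{C_1^{\,2}}\,\alpha_1^{-\frac{k-1}{k+1}z}\ <\ 1\qquad(z\text{ large}),
\]
so $v$ is extremely close to the rational multiple $u\alpha_1^{w}$.

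The third step is a Liouville-type lower bound pointing the other way. Since $\beta:=\alpha_1^{w}$ lies in $\Q(\alpha_1)$ it is an algebraic integer of degree $\le k$ over $\Q$, and it is irrational (a power of a Pisot number of degree $k\ge2$), so $v/u\ne\beta$. Applying $|P(v/u)|\ge u^{-k}$ to the monic integer polynomial $P(X)=\prod_{i=1}^{k}(X-\alpha_i^{\,w})$ and dividing out $\prod_{i\ge2}|v/u-\alpha_i^{w}|$, each of whose $k-1$ factors is $\ll\alpha_1^{w}$ (because $|\alpha_i|<1$), one obtains
\[
\bigl|\,v-u\beta\,\bigr|\ \gg\ \frac{1}{(u\alpha_1^{w})^{k-1}}\ \gg\ \frac{1}{v^{\,k-1}}\ \gg\ C_1^{\,k-1}\,\alpha_1^{-\frac{k-1}{k+1}z},
\]
a lower bound whose constant \emph{grows} with $C_1$. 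Both estimates carry exactly the power $\alpha_1^{-\frac{k-1}{k+1}z}$ — and it is precisely this matching that pins down the threshold exponent $\tfrac{k}{k+1}$: the Liouville exponent $k-1$ attached to a degree-$k$ algebraic number, balanced against the size $\alpha_1^{z/(k+1)}$ of the cofactor $v$. Hence the two become contradictory once the lower-bound constant ($\sim C_1^{k-1}$) overtakes the upper-bound one ($\sim C_1^{-2}$), i.e. once $C_1^{k+1}$ exceeds an explicit quantity depending only on $k$ and $|f_1|$; fixing such a $C_1$ yields $\gcd(F_y-1,F_z-1)<C_1\alpha_1^{\frac{k}{k+1}z}$ for all sufficiently large $y<z$.

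The only point that needs genuine care — and the expected obstacle — is the bookkeeping in the two middle steps: one must track how every implied constant depends on $C_1$ (two powers of $1/C_1$ in the upper bound, $k-1$ powers of $C_1$ in the lower bound, whence the exponent $k+1$), so that the final comparison is decided by the constants alone; and one must exclude the degenerate configurations $u=v$ (ruled out by $F_y<F_z$), $w=0$ (ruled out by $y<z$), and $\beta$ rational (ruled out since $\beta$ is a power of a Pisot number of degree $\ge2$), in each of which the Liouville step would break down.
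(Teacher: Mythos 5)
Your argument is correct and is in substance the same as the paper's: your Liouville step --- $u^kP(v/u)=\prod_{i=1}^k(v-u\alpha_i^{w})$ is a nonzero rational integer, with the identity embedding made small by cancellation of the leading terms $vf_1\alpha_1^y$ and $uf_1\alpha_1^z$ and the conjugate factors bounded by $\ll (u\alpha_1^w)^{k-1}\ll v^{k-1}$ --- is precisely the paper's estimate $g^k\le \bigl|N_{\K/\Q}\bigl((F_z-1)-\alpha_1^{z-y}(F_y-1)\bigr)\bigr|$ divided through by $d^k$, and your deduction $y\ge\tfrac{k}{k+1}z$ from the contradiction hypothesis is the contrapositive of the paper's case $y\le\kappa z$ with $\kappa=k/(k+1)$. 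The $C_1$-bookkeeping you flag does close as you predict, so the proof goes through.
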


\begin{proof}
Denote $g:=\gcd(F_y-1,F_z-1)$. Furthermore, let us assume that $y$ is large enough such that $|f_2\alpha_2^y + \dots + f_k \alpha_k^y| < 1/2$.  Let $\kappa$ be a constant to be determined later. If $y\le \kappa z$, then
\begin{equation}
\label{eq:ysmall}
g\le F_y-1<f_1 \alpha_1^y\le f_1 \alpha_1^{\kappa z}.
\end{equation}
Now let us assume that $y > \kappa z$. We denote $\lambda := z-y < (1 - \kappa) z$. Note that
$$
g\mid (F_z-1)-\alpha_1^{\lambda} (F_y-1)\qquad {\text{\rm in}}\qquad {\mathbb Q}(\alpha_1).
$$
Thus, we can write
$$
g\pi=(F_z-1)-\alpha_1^{\lambda} (F_y-1),
$$
where $\pi$ is some algebraic integer in ${\mathbb Q}(\alpha_1)$. Note that the right-hand side above is not zero, for if it were, we would get $\alpha_1^{\lambda}=(F_z-1)/(F_y-1)\in {\mathbb Q}$, which is false for $\lambda>0$. We compute norms from ${\mathbb  Q}(\alpha_1)$ to ${\mathbb Q}$. Observe that
\begin{equation*}\begin{split}
 \left|(F_z-1)-\right.&\left.\alpha_1^{\lambda}(F_y-1)\right|\\ & =  \left|(f_1\alpha_1^z+\cdots + f_k \alpha_k^z - 1)-\alpha_1^{\lambda}(f_1\alpha_1^{y}+\cdots + f_k \alpha_k^y - 1)\right|\\
& =  \left|\alpha_1^{\lambda}(1-f_2 \alpha_2^y - \cdots - f_k \alpha_k^y)-(1-f_2\alpha_2^z - \cdots -  f_k \alpha_k^z)\right| \\
& \le   \frac{3}{2} \alpha_1^{\lambda}-\frac{1}{2}<\frac{3}{2} \alpha_1^{\lambda}<\frac{3}{2} \alpha_1^{(1-\kappa)z}.
\end{split}\end{equation*}
Further, let $\sigma_i$ be any Galois automorphism that maps $\alpha_1$ to $\alpha_i$. Then for $i\ge 2$, we have
\begin{equation*}\begin{split}
\left|\sigma_i\left((F_z-1)-\alpha_1^{\lambda} (F_y-1)\right)\right| & =  \left|(F_z-1)-\alpha_i^{\lambda} (F_y-1)\right|\\
& <  F_z-1+F_y-1<\alpha_1^{z-1}+\alpha_1^{y-1}-2\\
& <  \left(1+\alpha_1^{-1}\right)\alpha_1^{z-1}\le C_2 \alpha^z,
\end{split}\end{equation*}
with $C_2$ being a suitable constant (e.g. $C_2 = \left(1+\alpha_1^{-1}\right)$).

Altogether, we obtain
\begin{equation*}\begin{split}
g^k & \le  g^k |N_{\K/{\mathbb Q}}(d\pi)|\\
& \le  \left| N_{\K/{\mathbb  Q}}((F_z-1)-\alpha_1^{\lambda} (F_y-1))\right|\\
& =  \left|\prod_{i=1}^k \sigma_i\left((F_z-1)-\alpha_1^{\lambda} (F_y-1)\right)\right|\\
& <  \frac{3}{2} \alpha_1^{(1-\kappa)z} C_2 (\alpha_1^{z})^{k-1}= C_3\alpha_1^{(k-\kappa)z}.
\end{split}\end{equation*}
Hence,
\begin{equation}
\label{eq:ylarge}
g \le C_4 \alpha_1^{(1-\kappa/k)x}.
\end{equation}
In order to balance \eqref{eq:ysmall} and \eqref{eq:ylarge}, we choose $\kappa$ such that $\kappa=1-\kappa/k$, giving $\kappa=k/(k+1)$ and
$$
g \le \max(f_1, C_4) \alpha_1^{\frac{k}{k+1}z} = C_1 \alpha_1^{\frac{k}{k+1}z},
$$
which proves the lemma.
\end{proof}

\section{Applying the Subspace theorem}
\label{sec:4}
The arguments in this section follow the arguments from \cite{FuLuSz1} and \cite{FuLuSz2}. We show that if there are infinitely many solutions to \eqref{eq:1}, then all of them can be parametrized by finitely many expressions as given in \eqref{eq:c} for $c$ below.

From now on, we may assume without loss of generality that $|\alpha_1| > |\alpha_2| \geq \cdots \geq |\alpha_k|$.

We assume that there are infinitely many solutions to \eqref{eq:1}. Then, for each integer solution $(a,b,c)$, we have
$$
a = \sqrt{\frac{(F_x - 1)(F_y - 1)}{F_z - 1}}, \, b = \sqrt{\frac{(F_x - 1)(F_z - 1)}{F_y - 1}}, \, c = \sqrt{\frac{(F_y - 1)(F_z - 1)}{F_x - 1}}.
$$
Our first aim is to prove, that the growth-rates of these infinitely many $x$, $y$ and $z$ have to be the same, except for a multiplicative constant. Let us recall that we trivially have $x < y < z$ and that, by Lemma \ref{le:2}, the solutions of $x$ need to diverge to infinity as well. We now want to prove that there exist constants $C_5, C_6 > 0$ such that $C_5z < y < C_6x$ for infinitely many triples $(x,y,z)$.
Let us choose $x, y, z$ large enough, such that $|f_2\alpha_2^x + \cdots + f_k \alpha_k^x| < 1$ and furthermore
$$
z > \frac{\log \left( \frac{2}{f_1(\alpha_1-1)}\right)}{\log \alpha_1} + 1,
$$
which implies $f_1 \alpha_1^{z-1}(\alpha_1 - 1) > 2$ and
$$
y > \frac{\log \left( \frac{2}{f_1^2(\alpha_1-1)}\right)}{\log \alpha_1},
$$
which implies $f_1^2 \alpha_1^{x + y}(\alpha_1 - 1) \geq f_1^2 \alpha_1^{y + 1}(\alpha_1 - 1) > 2$.
From the above, we obtain
$$
f_1^2 \alpha_1^{x+y+1} > f_1^2 \alpha_1^{x+y} + 2 \geq F_x F_y > (F_x - 1)(F_y - 1) \geq F_z - 1 \geq f_1 \alpha_1^{z} - 2 > f_1 \alpha_1^{z-1},
$$
which gives
$$
f_1\alpha_1^{x+y-1} \geq \alpha_1^{z - 1}.
$$
Choosing $C_7$ large enough, such that $\alpha_1^{C_7} > f_1$, we get $\alpha_1^{x+y+C_7-1} > \alpha_1^{z-1}$ and thus $x + y > z - C_7$.
For $z$ large enough, this gives
$$
2y > x + y \geq z - C_7 > \frac{z}{2},
$$
and thus $y > \frac{z}{4}$, which is the first inequality that we need.

In order to get a similar correspondence also between $x$ and $z$, we denote $d_1 := \gcd(F_y - 1, F_z - 1)$ and $d_2 := \gcd(F_x - 1, F_z - 1)$, such that $F_z - 1 \mid d_1 d_2$. Then we use Lemma \ref{lem:1} to obtain
$$
f_1 \alpha_1^{x-1} > F_x > F_x-1  \geq d_2 \geq \frac{F_z - 1}{d_1} \geq \frac{f_1 \alpha_1^{z} - 2}{C_8 \alpha_1^{\frac{kz}{k+1}}} \geq \frac{f_1}{C_8} \alpha_1^{\frac{z}{k+1}-1} > f_1 \alpha_1^{\frac{z}{k+1}-C_8}
$$
for $z$ large enough and hence
$$
x-1 > \frac{z}{k+1} - C_8
$$
which implies $x > C_9 z$ for a suitable new constant $C_9$ (depending only on $k$) and $x, z$ being sufficiently large.

Next, we do a Taylor series expansion for $c$ which was given by
\begin{equation}\label{eq:c-exp}
c = \sqrt{\frac{(F_y - 1)(F_z - 1)}{F_x - 1}}.
\end{equation}
Using the power sum representations of $F_x, F_y, F_z$, we get
\begin{equation*}\begin{split}
c = &\sqrt{f_1} \alpha_1^{(-x+y+z)/2} \\
& \cdot \left( 1 + (-1/f_1) \alpha_1^{-x} + (f_2/f_1) \alpha_2^x \alpha_1^{-x} + \dots + (f_k/f_1) \alpha_k^x \alpha_1^{-x}\right)^{-1/2} \\
& \cdot \left( 1 + (-1/f_1) \alpha_1^{-y} + (f_2/f_1) \alpha_2^y \alpha_1^{-y} + \dots + (f_k/f_1) \alpha_k^y \alpha_1^{-y} \right)^{1/2} \\
& \cdot \left( 1 + (-1/f_1) \alpha_1^{-z} + (f_2/f_1) \alpha_2^z \alpha_1^{-z} + \dots + (f_k/f_1) \alpha_k^z \alpha_1^{-z}\right)^{1/2}.
\end{split}\end{equation*}
We then use the binomial expansion to obtain
\begin{equation*}\begin{split}
\left( 1 \right.&\left.+ (-1/f_1) \alpha_1^{-x} + (f_2/f_1) \alpha_2^x \alpha_1^{-x} + \dots + (f_k/f_1) \alpha_k^x \alpha_1^{-x} \right)^{1/2} \\
&= \sum_{j=0}^T \binom{1/2}{j} \left((-1/f_1) \alpha_1^{-x} + (f_2/f_1) \alpha_2^x \alpha_1^{-x} + \dots + (f_k/f_1) \alpha_k^x \alpha_1^{-x} \right)^j \\
&\hspace*{1cm}+ \mathcal{O}(\alpha_1^{-(T+1) x}),
\end{split}\end{equation*}
where $\mathcal{O}$ has the usual meaning, using estimates from \cite{FuTi} and where $T$ is some index, which we will specify later.  Let us write ${\bf x}:=(x,y,z)$. Since $x < z$ and $z < x/C_9$, the remainder term can also be written as $\mathcal{O}(\alpha_1^{-T \| {\bf x}\|/C_9})$, where $\|{\bf x}\|=\max\{x,y,z\}=z$. Doing the same for $y$ and $z$ likewise and multiplying those expressions gives
\begin{equation}\label{eq:c-approx}
c=\sqrt{f_1} \alpha_1^{(-x+y+z)/2} \left( 1 + \sum_{j=1}^{n-1} d_j M_j\right) + \mathcal{O}(\alpha_1^{-T \|{\bf x}\|/C_9}),
\end{equation}
where the integer $n$ depends only on $T$, $d_j$ are non-zero coefficients in $\K={\mathbb Q}(\alpha_1,\ldots,\alpha_k)$, and $M_j$ is a monomial of the form
$$
M_j=\prod_{i=1}^k \alpha_i^{L_{i,j}({\bf x})},
$$
in which $L_{i,j}({\bf x})$ are linear forms in ${\bf x}\in {\mathbb R}^3$ with integer coefficients which are all non-negative if $i=2,\ldots,k$ and negative if $i=1$. Set $J=\{1,\ldots,n-1\}$.
Note that each monomial $M_j$ is ``small", that is there exists a constant $\kappa > 0$ (which we can even choose independently of $k$), such that
\begin{equation}\label{eq:Mj}
|M_j| \leq e^{-\kappa x}\qquad {\text{\rm for ~all}}\qquad j\in J.
\end{equation}
This follows easily from the following fact: By the Pisot property of $F_n$, we can write $|\alpha_1| > 1 + \zeta$ for a suitable $\zeta > 0$ (a conjecture of Lehmer
asserts that $\zeta$ can be chosen to be an absolute constant). Using this notation and a suitable $\kappa$, we have
\begin{equation*}\begin{split}
|M_j| &= |\alpha_1|^{L_{1,j}({\bf x})} \cdot |\alpha_2|^{L_{2,j}({\bf x})} \cdots |\alpha_k|^{L_{k,j}({\bf x})} \\
&\leq (1 + \zeta)^{L_{1,j}({\bf x})} \cdot 1 \cdots 1 \\
&\leq (1 + \zeta)^{-x} \\
&\leq e^{-\kappa x}\qquad {\text{\rm for ~all}}\qquad j\in J.
\end{split}\end{equation*}

Our aim of this section is to apply a version of the Subspace Theorem given in \cite{Ev} to show that there is a finite expansion of $c$ involving terms as in \eqref{eq:c-approx}; the version we are going to use can also be found in Section 3 of \cite{Fu2}, whose notation - in particular the notion of heights - we follow.

We work with the field $\K={\mathbb Q}(\alpha_1,\ldots,\alpha_k)$ and let $S$ be the finite set of places (which are normalized so that the Product Formula holds, cf. \cite{Ev}), that are either infinite or in the set $\{ v \in M_\K: |\alpha_1|_v \neq 1 \vee \cdots \vee |\alpha_k|_v \neq 1)\}$.
According to whether $-x+y+z$ is even or odd, we set $\epsilon = 0$ or $\epsilon = 1$ respectively, such that $\alpha_1^{(-x+y+z-\epsilon)/2} \in \K$. By going to a still infinite subset of the solutions, we may assume that $\epsilon$ is always either $0$ or $1$.

Using the fixed integer $n$ (depending on $T$) from above, we now define $n+1$ linearly independent linear forms in indeterminants $(C, Y_0, \dots, Y_n)$.
For the standard infinite place $\infty$ on $\C$, we set
\begin{equation}
l_{0, \infty}(C, Y_0, \dots, Y_{n-1}) := C - \sqrt{f_1 \alpha_1^\epsilon} Y_0 - \sqrt{f_1 \alpha_1^\epsilon} \sum_{j=1}^{n-1} d_{j} Y_j,
\end{equation}
where $\epsilon \in \{0,1\}$ is as explained above, and
$$
l_{i, \infty}(C, Y_0, \dots, Y_{n-1}) := Y_{i-1} \quad \textrm{for } i \in \{1, \dots, n\}.
$$
For all other places $v$ in $S$, we define
$$
l_{0, v} := C, \qquad l_{i, v} := Y_{i-1} \quad \textrm{ for } i = 1, \dots, n.
$$
We will show, that there is some $\delta > 0$, such that the inequality
\begin{equation}\label{eq:lf-ineq}
\prod_{v\in S}\prod_{i=0}^{n}\frac{\vert l_{i,v}({\bf
y})\vert_{v}}{\vert {\bf y}\vert_{v}} <\left(\prod_{v\in S}\vert
\det(l_{0,v},\ldots,l_{n,v})\vert_{v}\right)\cdot \mathcal{H}({\bf
y})^{-(n+1)-\delta}\,
\end{equation}
is satisfied for all vectors
$$
{\bf y} = (c, \alpha_1^{(-x+y+z-\epsilon)/2}, \alpha_1^{(-x+y+z-\epsilon)/2} M_1, \dots, \alpha_1^{(-x+y+z-\epsilon)/2} M_{n-1}).
$$
The use of the correct $\epsilon \in \{0,1\}$ guarantees that these vectors are indeed in $\K^n$.

First notice, that the determinant in \eqref{eq:lf-ineq} equals $1$ for all places $v$. Thus, \eqref{eq:lf-ineq} reduces to
$$
\prod_{v\in S}\prod_{i=0}^{n}\frac{\vert l_{i,v}({\bf
y})\vert_{v}}{\vert {\bf y}\vert_{v}} < \mathcal{H}({\bf
y})^{-(n+1)-\delta},
$$
and the double product on the left-hand side can be split up into
$$
\vert c - \sqrt{f_1 \alpha_1^\epsilon} y_0 - \sqrt{f_1 \alpha_1^\epsilon} \sum_{j=1}^{n-1} d_{j} y_j \vert_\infty \cdot \prod_{\substack{v \in M_{\K, \infty}, \\ v \neq \infty}} \vert c \vert_v \cdot \prod_{v \in S \backslash M_{\K, \infty}} \vert c \vert_v \cdot \prod_{j=1}^{n-1} \prod_{v \in S} \vert y_j \vert_v.
$$
Now notice that the last double product equals $1$ due to the Product Formula and that
$$
\prod_{v \in S \backslash M_{\K, \infty}} \vert c \vert_v \leq 1,
$$
since $c \in \mathbb{Z}$.
An upper bound on the number of infinite places in $\K$ is $k!$ and hence,
\begin{equation*}\begin{split}
\prod_{\substack{v \in M_{\K, \infty}, \\ v \neq \infty}} \vert c \vert_v &< \left( \frac{(T_y - 1) (T_z - 1)}{T_x - 1} \right)^{k!} \\
&\leq (f_1 \alpha_1^y + \cdots + f_k \alpha_k^y - 1)^{k!} (f_1 \alpha_1^z + \cdots + f_k \alpha_k^z - 1)^{k!} \\
&\leq (f_1 \cdot \alpha_1^{\| {\bf x} \|} + 1/2)^{2 \cdot k!}
\end{split}\end{equation*}
for $y$ large enough such that $|f_2 \alpha_2^y + \cdots + f_k \alpha_k^y| < 1/2$.
And finally the first expression is just
$$
\Big| \sqrt{f_1 \alpha_1^\epsilon}  \alpha_1^{(-x+y+z-\epsilon)/2} \sum_{j \geq n} d_j M_j \Big|,
$$
which, by \eqref{eq:c-approx}, is smaller than some expression of the form $C_{10} \alpha_1^{-T \|{\bf x}\| / C_9}$.
Therefore, we have
$$
\prod_{v\in S}\prod_{i=0}^{n}\frac{\vert l_{i,v}({\bf
y})\vert_{v}}{\vert {\bf y}\vert_{v}} < C_{10} \alpha_1^{-\frac{T \|{\bf x}\|}{C_{10}}} \cdot (f_1 \alpha_1^{\| {\bf x} \|} + 1/2)^{2 \cdot k!}.
$$

Now we choose $T$ (and the corresponding $n$) large enough such that
\begin{equation*}\begin{split}
C_{10} \alpha_1^{-\frac{T\|{\bf x}\|}{C_9}} &< \alpha_1^{-\frac{T\|{\bf x}\|}{2C_9}}, \\
(f_1 \alpha_1^{\|{\bf x}\|} + 1/2)^{2 \cdot k!} &< \alpha_1^{\frac{T\|{\bf x}\|}{4C_9}}.
\end{split}\end{equation*}
Then we can write
\begin{equation}
\prod_{v\in S}\prod_{i=0}^{n}\frac{\vert l_{i,v}({\bf y})\vert_{v}}{\vert {\bf y}\vert_{v}} < \alpha_1^{\frac{-T \|{\bf x}\|}{4C_9}}.
\end{equation}

For the height of our vector ${\bf y}$, we have the estimate
\begin{equation*}\begin{split}
\mathcal{H}({\bf y}) &\leq C_{11} \cdot \mathcal{H}(c) \cdot \mathcal{H}(\alpha_1^\frac{-x+y+z-\epsilon}{2})^n \cdot \prod_{i=1}^{n-1} \mathcal{H}(M_j) \\
&\leq C_{11}(f_1 \alpha_1^{\|{\bf x}\|} + 1/2)^{k!} \prod_{i=1}^{n-1} \alpha_1^{C_{12} \|{\bf x}\|} \\
&\leq \alpha_1^{C_{13} \|{\bf x}\|},
\end{split}\end{equation*}
with suitable constants $C_{11}, C_{12}, C_{13}$. For the second estimate, we used that
$$
\mathcal{H}(M_j) \leq \mathcal{H}(\alpha_1)^{C_{\alpha_1}({\bf x})}\mathcal{H}(\alpha_2)^{C_{\alpha_2}({\bf x})} \cdots \mathcal{H}(\alpha_k)^{C_{\alpha_k}({\bf x})}
$$
and bounded it by the maximum of those expressions. Furthermore we have
$$
\mathcal{H}(\alpha_1^\frac{-x+y+z-\epsilon}{2})^n \leq \alpha_1^{n \|{\bf x}\|},
$$
which just changes our constant $C_{13}$.

Now finally, the estimate
$$
\alpha_1^{-\frac{T \|{\bf x}\|}{4C_9}} \leq \alpha_1^{-\delta C_{13}\|{\bf x}\|}
$$
is satisfied provided that we pick $\delta$ small enough.

So all the conditions for the Subspace theorem are met. Since we assumed that there are infinitely many solutions $(x,y,z)$ of \eqref{eq:lf-ineq}, we now can conclude that all of them lie in finitely many proper linear subspaces. Therefore, there must be at least one proper linear subspace, which contains infinitely many solutions and we see that there exists a finite set $J_c$  and (new) coefficients $e_j$ (for $j\in J$) in $\K$ such that we have
\begin{equation}\label{eq:c}
c=\alpha_1^{(-x+y+z-\epsilon)/2} \left(e_0+\sum_{j\in J_c} e_j M_j\right)
\end{equation}
with monomials $M_j$ as before.

Likewise, we can find finite expressions of this form for $a$ and $b$.

\section{Parametrization of the solutions}\label{sec:5}

We use the following parametrization lemma:

\begin{lemma}
Suppose, we have infinitely many solutions for \eqref{eq:1}. Then there exists a line in $\mathbb{R}^3$ given by
\begin{displaymath}
x(t) = r_1 t + s_1 \quad y(t) = r_2 t + s_2 \quad z(t) = r_3 t + s_3
\end{displaymath}
with rationals $r_1, r_2, r_3, s_1, s_2, s_3$, such that infinitely many of the solutions $(x,y,z)$ are of the form $(x(n), y(n), z(n))$ for some integer $n$.
\end{lemma}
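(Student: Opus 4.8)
The plan is to feed the finitely many expansions \eqref{eq:c} for $a$, $b$, $c$ back into the system \eqref{eq:1}, to convert the resulting relations into a single polynomial--exponential equation in the unknown triple $\mathbf{x}=(x,y,z)$, and then to read the line off from the structure of its solution set.

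First I would normalise. By the conclusion of Section~\ref{sec:4}, each of $a$, $b$, $c$ agrees, along the infinite family, with one of \emph{finitely many} expressions of the shape \eqref{eq:c}, so by the pigeonhole principle I may pass to an infinite subfamily on which a single such expression is used for each of $a$, $b$, $c$ and the parities $\epsilon_a,\epsilon_b,\epsilon_c$ are fixed. All these expressions live in a fixed finite extension $\K'$ of $\K$ (adjoin the square roots of $\alpha_1$, $f_1$, $f_1\alpha_1$ that occur), and every monomial that appears has the form $g\prod_{i=1}^{k}\alpha_i^{\Lambda_i(\mathbf{x})}$ with $g\in\K'$ fixed and $\Lambda_i$ affine--linear in $\mathbf{x}$; on the ``small'' monomials $M_j$ the coefficient of $\mathbf{x}$ in $\Lambda_1$ is negative and those in $\Lambda_i$ $(i\ge2)$ are non-negative, exactly as in \eqref{eq:Mj}.

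Next I would produce the equation. From \eqref{eq:1} one gets $c^{2}(F_x-1)=(F_y-1)(F_z-1)$ together with the two analogues for $a$ and $b$. Substituting the normalised expansions and clearing $\alpha_1$-powers turns each side into a generalised power sum in $\mathbf{x}$, and after collecting terms I obtain an identity
\[
\sum_{\nu}g_\nu\prod_{i=1}^{k}\alpha_i^{\Lambda_{\nu,i}(\mathbf{x})}=0,\qquad g_\nu\in\K'\setminus\{0\},
\]
valid for all $\mathbf{x}$ in the subfamily, with pairwise distinct exponent systems $\Lambda_\nu$. Dividing by one of the terms, this becomes a linear relation $\sum_\nu g_\nu u_\nu(\mathbf{x})=0$ whose unknowns $u_\nu(\mathbf{x})=\prod_i\alpha_i^{\Lambda_{\nu,i}(\mathbf{x})-\Lambda_{\nu_0,i}(\mathbf{x})}$ all lie in the finitely generated multiplicative group $\Gamma=\langle\alpha_1,\dots,\alpha_k\rangle$. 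By the finiteness theorem for linear equations with unknowns from a finitely generated group (a consequence of the Subspace Theorem, cf.\ \cite{Ev}), the non-degenerate solution vectors $(u_\nu(\mathbf{x}))_\nu$ form a finite set, and the vanishing proper subsums are treated recursively. Hence each difference $\Lambda_\nu(\mathbf{x})-\Lambda_{\nu_0}(\mathbf{x})\in\Z^{k}$ lies in finitely many cosets of the lattice of multiplicative relations of $\Gamma$, and pulling these back through the affine--linear maps shows that $\mathbf{x}$ ranges over a finite union of cosets of sublattices of $\Z^{3}$; an infinite subfamily then sits in one such coset.

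It remains to show that this coset is a line, i.e.\ has rank $1$. It cannot have rank $0$, since a point carries only one solution; and a coset of rank $\ge 2$ I would exclude by ordering the terms of the three exponential identities by size with the help of the bounds $x+y>z-C_7$, $y>z/4$ and $x>C_9z$ from Sections~\ref{sec:2}--\ref{sec:4}, matching dominant terms on the two sides successively: each mismatch that this forces must be absorbed by a ``small'' monomial $M_j$ becoming comparable to a leading term, which (taking logarithms of absolute values) imposes a linear relation on $(x,y,z)$, and accumulating these yields two independent ones, contradicting rank $\ge2$. Thus the coset has rank exactly $1$, so there are primitive integers $r_1,r_2,r_3$ and integers $s_1,s_2,s_3$ with $x=r_1n+s_1$, $y=r_2n+s_2$, $z=r_3n+s_3$ for infinitely many of our solutions --- which is the asserted line (and the normalisations $r_i>0$, $\gcd(r_1,r_2,r_3)=1$ required for Theorem~\ref{th:main1} then follow from $x<y<z\to\infty$). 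The hard part is exactly this last paragraph: the bookkeeping of the many terms of the exponential equation, the treatment of its degenerate cases, and above all the exclusion of a genuinely $2$-dimensional family of solutions --- this is where the quantitative growth estimates of Lemmas~\ref{le:2} and \ref{lem:1} are indispensable, since they pin down the relative sizes of $x$, $y$, $z$ so that the leading behaviour of the power sum can be read off and enough independent linear constraints harvested.
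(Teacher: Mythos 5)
Your first reduction follows the paper's route: substitute the finite expansions from Section \ref{sec:4} into the defining relations, rewrite the result as an equation $\sum_\nu g_\nu \prod_i \alpha_i^{\Lambda_{\nu,i}(\mathbf{x})}=0$ with exponents affine--linear in $\mathbf{x}$, and apply the Evertse--Schlickewei--Schmidt theorem on equations with unknowns in a finitely generated multiplicative group (the paper is slightly more careful here, first passing to a basis $\beta_1,\dots,\beta_\ell$ of the group generated by $\alpha_1,\dots,\alpha_k,-1$, since the $\alpha_i$ need not be multiplicatively independent; your ``lattice of multiplicative relations'' phrasing covers the same point). Up to the conclusion that infinitely many $\mathbf{x}$ lie in a coset of a \emph{proper} sublattice of $\Z^3$, i.e.\ that one nontrivial linear relation $L(\mathbf{x})=c$ holds, you and the paper agree.

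The gap is in your last paragraph, the passage from rank $\le 2$ to rank $1$. The mechanism you propose for harvesting a second independent linear relation --- a ``small'' monomial $M_j$ becoming comparable to a leading term --- cannot occur: by \eqref{eq:Mj} every $M_j$ satisfies $|M_j|\le e^{-\kappa x}$ with $x\to\infty$ (Lemma \ref{le:2}), so these monomials are uniformly exponentially small and never rival the dominant terms. Matching dominant terms therefore yields identities among the \emph{coefficients} (such as $e_0^2=f_1\alpha_1^{\epsilon}$, which is exactly what Section \ref{sec:nosquare} later exploits), not linear constraints on $(x,y,z)$; and it is not clear that successive term--matching ever produces two independent relations --- indeed Theorem \ref{th:main1} asserts that a genuinely one--parameter family can a priori satisfy the functional identity, so the second relation cannot be extracted from asymptotics of a single identity ``for free.'' The paper closes this step differently: having obtained the first relation, it reparametrizes the solutions by two integer parameters $(\lambda,\mu)$ (clearing denominators so that all coefficients become integers), rewrites the same $S$-unit equation with exponents affine--linear in $(\lambda,\mu)$, and applies the Evertse--Schlickewei--Schmidt theorem a \emph{second} time to force one more nontrivial linear relation, cutting the plane down to a line. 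You should replace your size argument by this second application of the $S$-unit theorem (equivalently, an induction on the rank of the coset); as written, the decisive step of your proof does not go through.
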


\begin{proof}
Assume that \eqref{eq:1} has infinitely many solutions. We already deduced in Section \ref{sec:4} that $c$ can be written in the form
\begin{displaymath}
c=\alpha_1^{(-x+y+z-\epsilon)/2} \left(e_{c,0}+\sum_{j\in J_c} e_{c,j} M_{c,j}\right)
\end{displaymath}
with $J_c$ being a finite set, $e_{c,j}$ being coefficients in $\K$ for $j \in J_c \cup \{0\}$ and $M_{c,j} = \prod_{i=1}^k \alpha_i^{L_{c,i,j}({\bf x})}$ with ${\bf x}=(x,y,z)$.
In the same manner, we can write
\begin{displaymath}
b=\alpha_1^{(x-y+z-\epsilon)/2} \left(e_{b,0}+\sum_{j\in J_b} e_{b,j} M_{b,j}\right).
\end{displaymath}
Since $1 + bc = F_z = f_1 \alpha_1^z + \cdots + f_k \alpha_k^z$, we get
\begin{equation}\label{eq:s-uniteq}
f_1 \alpha_1^z + \cdots + f_k \alpha_k^z - \alpha_1^{z-\varepsilon}  \left(e_{b,0}+\sum_{j\in J_b} e_{b,j} M_{b,j}\right)\left(e_{c,0}+\sum_{j\in J_c} e_{c,j} M_{c,j}\right) = 1.
\end{equation}

We now pick $\beta_1, \dots, \beta_\ell$ as a basis for the multiplicative group generated by $\{\alpha_1, \dots, \alpha_k, -1\}$, which we will denote with $S$. We express each $\alpha_1, \dots, \alpha_k$ as a product of $\beta_1, \dots, \beta_\ell$ and insert them into \eqref{eq:s-uniteq}. We obtain a new equation of the form
\begin{equation}\label{eq:s-uniteq2}
\sum_{j \in  J} e_j \beta_1^{L_{1,j}({\bf x})} \cdots \beta_{\ell}^{L_{\ell,j}({\bf x})} = 0,
\end{equation}
where again $J$ is some finite set, $e_j$ are new coefficients in $\K$ and $L_{i,j}$ are linear forms in ${\bf x}$ with integer coefficients. Note that the sum on the left hand side is not zero, since it contains the summand $-1$.
This is an $S$-unit equation.

We may assume that infinitely many of the solutions ${\bf x}$ are non-degenerate solutions of (\ref{eq:s-uniteq2}) by replacing the equation by a new equation given by a suitable vanishing subsum if necessary.

We may assume, that $(L_{1,i}, \dots, L_{\ell,i}) \neq (L_{1,j}, \dots, L_{\ell,j})$ for any $i \neq j$, because otherwise we could just merge these two terms.

Therefore for $i \neq j$, the theorem on non-degenerate solutions to $S$-unit equations (see \cite{ESS}) yields that the set of $$\beta_1^{L_{1,i}({\bf x}) - L_{1,j}({\bf x})} \cdots \beta_{\ell}^{L_{\ell,i}({\bf x}) - L_{\ell,j}({\bf x})}$$ is contained in a finite set of numbers.
Now since $\beta_1, \dots, \beta_{\ell}$ are multiplicatively independent, the exponents $(L_{1,i} - L_{1,j})({\bf x}), \ldots, (L_{\ell,i} - L_{\ell,j})({\bf x})$ take the same value for infinitely many ${\bf x}$. Since we assumed, that these linear forms are not all identically zero, this implies, that there is some non-trivial linear form $L$ defined over $\Q$ and some $c\in\mathbb{Q}$ with $L({\bf x}) = c$ for infinitely many ${\bf x}$.
So there exist rationals $r_i, s_i, t_i$ for $i = 1, 2, 3$ such that we can parametrise
\begin{displaymath}
x = r_1 p + s_1 q + t_1, \quad y = r_2 p + s_2 q + t_2, \quad z = r_3 p + s_3 q + t_3
\end{displaymath}
with infinitely many pairs $(p,q) \in \mathbb{Z}^2$.

We can assume, that $r_i, s_i, t_i$ are all integers. If not, we define $\Delta$ as the least common multiple of the denominators of $r_i, s_i$ ($i= 1, 2,3$) and let $p_0, q_0$ be such that for infinitely many pairs $(p, q)$ we have $p \equiv p_0 \mod \Delta$ and $q \equiv q_0 \mod \Delta$. Then $p = p_0 + \Delta \lambda, q = q_0 + \Delta \mu$ and
\begin{equation*}\begin{split}
x & =  (r_1\Delta) \lambda+(s_1\Delta) \mu+(r_1p_0+s_1q_0+t_1)\\
y & =  (r_2\Delta) \lambda+(s_2\Delta) \mu+(r_2p_0+s_2q_0+t_2)\\
z & =  (r_3\Delta) \lambda+(s_3\Delta) \mu+(r_3p_0+s_3q_0+t_3).
\end{split}\end{equation*}
Since $r_i \Delta$, $s_i \Delta$ and $x, y, z$ are all integers, $r_i p_0 + s_i q_0 + t_i$ are integers as well. Replacing $r_i$ by $r_i \Delta$, $s_i$ by $s_i \Delta$ and $t_i$ by $r_i p_0 + s_i q_0 + t_i$, we can indeed assume, that all coefficients $r_i, s_i, t_i$ in our parametrization are integers.

Using a similar argument as in the beginning of the proof, we get that our equation is of the form
\begin{displaymath}
\sum_{j \in J} e'_j \beta_1^{L'_{1,j}({\bf r})} \cdots \beta_{\ell}^{L'_{\ell,j}({\bf r})} = 0,
\end{displaymath}
where ${\bf r} := (\lambda, \mu)$, $J$ is a finite set of indices, $e_j'$ are new non-zero coefficients in $\K$ and $L'_{i,j}({\bf r})$ are linear forms in ${\bf r}$ with integer coefficients.
Again we may assume that we have $(L'_{1,i}({\bf r}), \dots, L'_{\ell,i}({\bf r})) \neq (L'_{1,j}({\bf r}), \dots, L'_{\ell,j}({\bf r}))$ for any $i \neq j$.

Applying the theorem of non-degenerate solutions to $S$-unit equations once more, we obtain a finite set of numbers $\Lambda$, such that for some $i \neq j$, we have
\begin{displaymath}
\beta_1^{(L'_{1,i} - L'_{1,j})({\bf r})} \cdots \beta_{\ell}^{(L'_{\ell,i} - L'_{\ell,j})({\bf r})} \in \Lambda.
\end{displaymath}
So every ${\bf r}$ lies on a finite collection of lines and since we had infinitely many ${\bf r}$, there must be some line, which contains infinitely many solutions, which proves our lemma.
\end{proof}

We apply this lemma and define $\Delta$ as the least common multiple of the denominators of $r_1, r_2, r_3$. Infinitely many of our $n$ will be in the same residue class modulo $\Delta$, which we shall call $r$.
Writing $n = m \Delta + r$, we get
\begin{displaymath}
(x,y,z) = ((r_1 \Delta) m + (r r_1 + s_1), (r_2 \Delta) m + (r r_2 + s_2), (r_3 \Delta) m + (r r_3 + s_3) ).
\end{displaymath}
Replacing $n$ by $m$, $r_i$ by $r_i \Delta$ and $s_i$ by $r r_i + s$, we can even assume, that $r_i, s_i$ are integers.
So we have

\begin{displaymath}
\frac{-x+y+z-\epsilon}{2} = \frac{(-r_1+r_2+r_3)m}{2} + \frac{-s_1+s_2+s_3 - \epsilon}{2}.
\end{displaymath}
This holds for infinitely many $m$, so we can choose a still infinite subset such that all of them are in the same residue class $\chi$ modulo $2$ and we can write $m = 2 \ell + \chi$ with fixed $\chi \in \{0,1\}$.
Thus, we have
\begin{displaymath}
\frac{-x+y+z-\epsilon}{2} = (-r_1 + r_2 + r_3) \ell + \eta,
\end{displaymath}
where $\eta \in \mathbb{Z}$ or $\eta \in \mathbb{Z} + 1/2$.

Using this representation, we can write \eqref{eq:c} as
$$
c(\ell) = \alpha_1^{(-r_1+r_2+r_3)\ell + \eta} \left(e_0+\sum_{j\in J_c} e_j M_j\right).
$$
for infinitely many $\ell$, where
$$
M_j=\prod_{i=1}^k \alpha_i^{L_{i,j}({\bf x})},
$$
as before and ${\bf x} = {\bf x}(\ell) = (x(2\ell + \chi), y(2\ell + \chi), z(2\ell + \chi))$.

So for infinitely many solutions $(x,y,z)$, we have a parametrization in $\ell$, such that $c$ is a power sum in this $\ell$ with its roots being products of $\alpha_1, \dots, \alpha_k$.
This, together with \eqref{eq:c-exp} gives the functional identity
\begin{equation}\label{eq:xcyz}
(F_x - 1) c^2 = (F_y - 1)(F_z - 1),
\end{equation}
which proves the main theorem.

\section{Linear recurrences with nonsquare leading coefficient}
\label{sec:nosquare}

The aim of this section is to prove Theorem \ref{co:nonsquare}.

\begin{proof}
We prove this result by contradiction: Suppose we had infinitely many Diophantine triples in $\{F_n; n\geq 0\}$. Then we can apply Theorem \ref{th:main1} as in the Sections \ref{sec:2}, \ref{sec:4} and \ref{sec:5} and obtain
\begin{equation}\label{eq:c-par}
c(\ell) = \alpha_1^{(-r_1+r_2+r_3)\ell + \eta} \left(e_0+\sum_{j\in J_c} e_j M_j\right)
\end{equation}
for infinitely many $\ell$, where
$$
M_j=\prod_{i=1}^k \alpha_i^{L_{i,j}({\bf x})},
$$
as before and ${\bf x} = {\bf x}(\ell) = (x(2\ell + \chi), y(2\ell + \chi)), z(2\ell + \chi))$.

First we observe, that there are only finitely many solutions of \eqref{eq:c-par} with
$c(\ell) = 0$. That can be shown by using the fact, that a simple non-degenerate linear recurrence has only finite zero-multiplicity (see \cite{ESS} for an explicit bound).
We will apply this statement here for the linear recurrence in $\ell$; it only remains to check, that no quotient of two distinct roots of the form \[\alpha_1^{L_{1,i}({\bf x}(\ell))} \cdots \alpha_k^{L_{k,i}({\bf x}(\ell))}\] is a root of unity or, in other words, that
\begin{equation*}\label{eq:unit-eq}
(\alpha_1^{m_1} \alpha_2^{m_2} \cdots \alpha_k^{m_k})^n = 1
\end{equation*}
has no solutions in $n \in \mathbb{Z}/\{0\}$, $m_1 < 0$ and $m_i > 0$ for $i = 2, \dots, k$. But this follows at once from Mignotte's result \cite{MM}.

So, we have confirmed that $c(\ell) \neq 0$ for still infinitely many solutions.
We use \eqref{eq:c-exp} and write
\begin{equation}\label{eq:xcyz}
(F_x - 1) c^2 = (F_y - 1)(F_z - 1).
\end{equation}
Then we insert the finite expansion \eqref{eq:c-par} in $\ell$ for $c$ into \eqref{eq:xcyz}.
Furthermore, we use the Binet formula
\begin{equation}\label{eq:Binet}
F_x = f_1 \alpha_1^x + \cdots + f_k \alpha_k^x
\end{equation}
and write $F_x, F_y, F_z$ as power sums in $x$, $y$ and $z$ respectively. Furthermore, we use the finite expansion of $c$, that we obtained by the Subspace theorem. We get an equation of the form
\begin{equation*}
\begin{split}
(f_1\alpha_1^x &+ \cdots + f_k \alpha_k^x - 1)\cdot \\
&\cdot \alpha_1^{-x+y+z-\epsilon} (e_0^2 + 2e_0 e_1 \alpha_1^{-x} + 2e_0 e_2 \alpha_1^{-y} + 2e_0 e_3 \alpha_1^{-z} + e_1^2 \alpha_1^{-2x} + \cdots) \\
&= (f_1 \alpha_1^y + \cdots + f_k \alpha_k^y - 1)(f_1 \alpha_1^z + \cdots + f_k \alpha_k^z - 1),
\end{split}
\end{equation*}
Using the parametrization $(x,y,z) = (r_1 m + s_1, r_2 m + s_2, r_3 m + s_3)$ with $m = 2\ell$ or $m = 2\ell+1$ as above, we have expansions in $\ell$ on both sides of \eqref{eq:xcyz}. Since there must be infinitely many solutions in $\ell$, the largest terms on both sides have to grow at the same rate.

In order to find the largest terms, let us first note the following: If $e_0 = 0$ for infinitely many of our solutions, then the largest terms were
\begin{equation}\label{eq:e0_0}
f_1 \alpha_1^x \alpha_1^{-x+y+z-\epsilon} e_1^2 \alpha_1^{-2x} = f_1 \alpha_1^y f_1 \alpha_1^z,
\end{equation}
or some even smaller expression on the left-hand side, if $e_1 = 0$ as well. Note that there could be more than one term in the expansion of $c$ with the same growth rate, for example if $y$ and $z$ are just translates of $x$ and thus $\alpha_1^{-y} = \alpha_1^{-x-c} = C \alpha_1^{-x}$, but this would only change the coefficient $e_1$ which we do not know anyway.
From \eqref{eq:e0_0}, we get
$$
e_1^2 \alpha_1^{-2x+y+z-\epsilon} = f_1 \alpha_1^{y+z}.
$$
Dividing by $\alpha_1^{y+z}$ on both sides, we see that the left-hand side converges to $0$, when $x$ grows to infinity (which it does by Lemma \ref{le:2}), while the right-hand-side is the constant $f_1 \neq 0$. This is a contradiction.

So we must have that $e_0 \neq 0$ for infinitely many of our solutions. Then $e_0 \alpha_1^{(-x+y+z-\epsilon)/2}$ certainly is the largest term in the expansion of $c$ and we have
\begin{displaymath}
f_1 \alpha_1^x \alpha_1^{-x+y+z-\epsilon} e_0^2 = f_1 \alpha_1^y f_1 \alpha_1^z.
\end{displaymath}
for the largest terms, which implies that $e_0^2 = f_1 \alpha_1^{\epsilon}$.
But this is a contradiction, since we assumed that neither $f_1$ nor $f_1 \alpha_1$ is a square in $\K$. So, the theorem is proved.
\end{proof}

\section{Linear recurrences of large order}
\label{sec:56}

We now prove Theorem \ref{th:main2}.

\begin{proof}
We follow the same notation as in the proof of Theorem \ref{th:main1}. Supposing that we have infinitely many Diophantine triples with values in $\{F_n; n\geq 0\}$, we get the functional identity
$$
(F_x - 1) C_\ell^2 = (F_y - 1) (F_z - 1),
$$
where $x=r_1\ell+s_1,y=r_2\ell+s_2,z=r_3\ell+s_3$, $r_1,r_2,r_3$ positive integers with gcd$(r_1,r_2,r_3)=1$ and $s_1,s_2,s_3$ integers.

We first handle (i) in the theorem. Therefore assume that $\alpha$ is not a unit. Then, by Mignotte's result \cite{MM}, there is no multiplicative dependence between the roots and thus (e.g. by using Lemma 2.1 in \cite{cz}), it follows that if we put ${\bf X}=(X_1,\ldots,X_k)$ and
$$
P_i({\bf X})=\sum_{j=1}^k f_j \alpha_j^{s_i} X_j^{r_i}-1\in \K[X_1,\ldots,X_k]\quad {\text{\rm for}}\quad  i=1,2,3,
$$
then for each $h\in \{1,2,3\}$ putting $i,j$ for the two indices such that $\{h,i,j\}=\{1,2,3\}$, we have that
$$
\frac{P_i({\bf X})P_j({\bf X})}{P_h({\bf X})}=Q_h({\bf X})^2,
$$
for some $Q_h({\bf X})\in \K[X_1^{\pm 1},\ldots,X_k^{\pm 1}]$. For this we have to identify the exponential function $\ell\mapsto \alpha_1^\ell$ by $X_1$, $\ell\mapsto\alpha_2^\ell$ by $X_2$ and so forth. Actually, Theorem \ref{th:main1} shows that $Q_h({\bf X})\in\K[X_1^{\pm 1},X_2,\ldots,X_k]$. This imposes some conditions on the degrees:
\medskip

(P) {\it Parity:} $r_1+r_2+r_3\equiv 0\pmod 2$. This is clear from degree considerations since $2{\text{\rm deg}_{X_2}}(Q_h)={\text{\rm deg}_{X_2}}(P_i)+{\text{\rm deg}_{X_2}} (P_j)-{\text{\rm deg}_{X_2}} (P_h)=r_i+r_j-r_h$.

\medskip

(T) {\it Triangular inequality:} $r_1+r_2> r_3$. It is clear that $r_1+r_2\ge r_3$, otherwise $P_1({\bf X})P_2({\bf X})/P_3({\bf X})$ has negative degree as a polynomial in, say, $X_2$, so it cannot be a polynomial in $X_2$.
To see that the inequality must be in fact strict, assume that equality holds. Then $Q_3({\bf X})=q_3\in \K[X_1^{\pm 1}]$.
Hence,
$$
P_1({\bf X})P_2({\bf X})=q_3^2 P_3({\bf X}).
$$
In the left, we have the monomial $X_1^{r_1} X_2^{r_2}$ with non-zero coefficient $f_1f_2 \alpha_1^{s_1} \alpha_2^{s_2}$, whenever  $r_1<r_2$. However, such monomials do not appear in the right above. Thus,
we must have $r_1=r_2$, and since further we also have $r_3=r_1+r_2$ and $\gcd(r_1,r_2,$ $r_3)=1$, it follows that $(r_1,r_2,r_3)=(1,1,2)$. In this case, the coefficient of $X_1X_2$ in the left is
$$
f_1f_2(\alpha_1^{s_1}\alpha_2^{s_2}+\alpha_2^{s_1}\alpha_1^{s_2}),
$$
and this must be zero since $X_1X_2$ does not appear in $P_3({\bf X})$. This shows that $$(\alpha_1/\alpha_2)^{s_1-s_2}=-1,$$ so $s_1=s_2$. But then  $x=y$, which is not allowed.

We will make more observations later. The key ingredient is the following lemma.

\begin{lemma}\label{le:7}
In case \emph{(i)} there are specializations $(X_3,\ldots,X_k)=(x_3,\ldots,x_k)$, a vector of non-zero algebraic numbers such that
$$
\sum_{j=3}^k f_j \alpha_j^{s_i} x_j^{r_i}=1,\quad {\text{for}}\quad i=1,2,3.
$$
The same is true in case \emph{(ii)} where we can further impose a condition of the form
$$
x_3\cdots x_k=1
$$
or
$$
x_3\cdots x_k=-1.
$$
\end{lemma}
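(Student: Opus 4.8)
The plan is to recast the lemma as a pure existence statement for a polynomial system and to settle it by a dimension count, using B\'ezout's theorem and Mignotte's non-degeneracy result \cite{MM} to keep the solution variety from degenerating. Write $c_{ij}=f_j\alpha_j^{s_i}$ (all non-zero, since $f_j\neq0$) and
$$
g_i(X_3,\ldots,X_k)=\sum_{j=3}^k c_{ij}X_j^{r_i}-1,\qquad i=1,2,3 .
$$
What must be produced is a point of the torus $\mathbb{G}_m^{k-2}$ lying on $\{g_1=g_2=g_3=0\}$ in case (i), and on this set intersected with one of the two hypersurfaces $\{\prod_{j=3}^k X_j=\pm1\}$ in case (ii). Homogenize $g_i$ to $\widehat g_i=\sum_{j=3}^k c_{ij}X_j^{r_i}-X_0^{r_i}$ (degree $r_i$) in $\mathbb{P}^{k-2}$ with coordinates $(X_0:X_3:\cdots:X_k)$, and in case (ii) adjoin the homogeneous equation $\prod_{j=3}^k X_j=\pm X_0^{k-2}$. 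In case (i) this is an intersection of $3\le k-2$ hypersurfaces (here $k\ge5$) and in case (ii) of $4\le k-2$ hypersurfaces (here $k\ge6$); by the projective dimension theorem the intersection $\overline V\subseteq\mathbb{P}^{k-2}$ is then non-empty, and each of its irreducible components has dimension $\ge k-5$, resp. $\ge k-6$, hence $\ge0$.

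It suffices to exhibit a point of $\overline V$ outside the ``bad locus'' $B=\{X_0=0\}\cup\bigcup_{j=3}^k\{X_j=0\}$: such a point has $X_0\neq0$, so it dehomogenizes to an affine point $(x_3,\ldots,x_k)$ with all coordinates non-zero, which solves $g_i=0$ for $i=1,2,3$ and, in case (ii), already lies on $\prod x_j=\pm1$. Suppose no such point exists, i.e. $\overline V\subseteq B$; then, $\overline V$ and $B$ being finite unions of irreducibles resp.\ hyperplanes, some component $Z$ of $\overline V$ lies in a single hyperplane $H\subseteq B$. Since $Z$ is then a component of $\overline V\cap H$, restricting the defining equations to $H$ yields an analogous diagonal (Fermat-type) system on a projective space of one less dimension, of which $Z$ is again a component; one notes that in case (ii) the product hypersurface restricted to any $\{X_j=0\}$ forces $X_0=0$, so the whole reduction there takes place at infinity.

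Iterating the reduction and inspecting the finitely many terminal configurations, $Z$ must produce a contradiction: it either collapses to the point $(1:0:\cdots:0)$, where $\widehat g_i=-1\neq0$; or it leaves a diagonal system in two or three variables over a coordinate torus, where (using $\gcd(r_1,r_2,r_3)=1$, the parity relation (P), the strict triangle inequality (T) and $x<y<z$) the only possibilities are that some monomial $\alpha_1^{m_1}\cdots\alpha_k^{m_k}$ is a root of unity or that $s_1/r_1=s_2/r_2=s_3/r_3$; or it forces two of the restricted diagonal polynomials to share a non-unit factor, which again yields a multiplicative relation among the $\alpha_j$. By Mignotte's theorem \cite{MM} -- together with the hypothesis that $\alpha$ is not a unit in case (i), and the fact that in case (ii) the adjoined hypersurface $\prod X_j=\pm1$ absorbs the one possible norm-type relation that a unit $\alpha$ could contribute -- no nontrivial multiplicative dependence among $\alpha_1,\ldots,\alpha_k$ survives, and diagonal polynomials with such coefficients are pairwise coprime; and $s_1/r_1=s_2/r_2=s_3/r_3$ would make $x,y,z$ proportional, in particular $x=y$, excluded exactly as the strict inequality (T) was. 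Hence $\overline V\not\subseteq B$, a good point exists, and the lemma follows; the sign $\pm1$ in case (ii) is free because one only needs that one of the two choices is transverse to the other three hypersurfaces.

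The main obstacle is precisely this last part: showing the solution variety cannot disappear into the coordinate hyperplanes and the hyperplane at infinity. It breaks into (a) proving that the diagonal polynomials arising along the reduction are pairwise coprime -- here one exploits that the coefficient ratios $c_{ij}/c_{i'j}=\alpha_j^{\,s_i-s_{i'}}$ are multiplicatively generic in $j$ by Mignotte's result, so no common factor can appear -- and (b) a short finite case check, using $\gcd(r_1,r_2,r_3)=1$, (P), (T) and $x<y<z$, to eliminate the few surviving exponent patterns (in particular the proportional case). B\'ezout's theorem is the quantitative backbone throughout: it bounds the degrees and numbers of components of the auxiliary intersections appearing in the reduction, and it guarantees that adjoining $\prod X_j=\pm1$ in case (ii) lowers the dimension by exactly one, without emptying the intersection.
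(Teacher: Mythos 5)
Your overall strategy (homogenize, invoke B\'ezout, and use Mignotte's theorem together with the conditions on $r_i,s_i$ to rule out degenerate configurations) points in the right direction, but the argument as written has a genuine gap at its center: the dimension-theoretic reduction cannot replace an actual count of solutions, and it is precisely in the critical minimal cases $k=5$ (case (i)) and $k=6$ (case (ii)) that it gives nothing. In those cases the expected dimension of $\overline V$ is $0$, so $\overline V$ is (a priori) a finite set of points, and the projective dimension theorem is perfectly consistent with \emph{all} of these points lying in the bad locus $B$: a zero-dimensional component contained in $\{X_0=0\}$ or in a coordinate hyperplane is not an excess-dimensional component and produces no contradiction. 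Indeed the restricted system at infinity, $\sum_j c_{ij}X_j^{r_i}=0$ ($i=1,2,3$), genuinely has solutions with all coordinates non-zero (three Fermat-type hypersurfaces in $\mathbb{P}^{k-3}$), so your claim that every ``terminal configuration'' of the iterated restriction yields a root of unity, a relation $s_1/r_1=s_2/r_2=s_3/r_3$, or a shared factor is false as stated; these points at infinity are not contradictory, they are simply solutions that must be shown to be \emph{too few}. What is needed, and what your proposal never performs, is the quantitative comparison: the total number of projective solutions (all simple, by the gradient computation) is at least $r_1r_2r_3$ (resp.\ $4r_1r_2r_3$ after adjoining the product equation), while the number of solutions with $z=0$ is at most $3+r_1r_2$ --- the ``$3$'' coming from the case where one of the remaining coordinates also vanishes (bounded via $\gcd(r_1,r_2,r_3)=1$ and Mignotte), and the ``$r_1r_2$'' from B\'ezout applied to two distinct irreducible Fermat-type plane curves (distinctness again via Mignotte). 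The inequality $3+r_1r_2<r_1r_2r_3$, forced by (P) and (T), is the decisive step, and it only becomes available after first specializing all but three (resp.\ four) of the variables so that the system is honestly zero-dimensional in a $\mathbb{P}^3$ (resp.\ $\mathbb{P}^4$) where B\'ezout gives an exact count. Working in all of $\mathbb{P}^{k-2}$ at once, as you do, forfeits this count.

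Two further points. First, your assertion that adjoining $\prod X_j=\pm X_0^{k-2}$ ``lowers the dimension by exactly one, without emptying the intersection'' is not something B\'ezout or the dimension theorem gives you; in the paper's treatment the product equation is instead used positively, to force $z\neq 0$ (hence all $x_j\neq 0$) for any solution, which is why case (ii) reduces so cleanly to the count already done in case (i). Second, the preliminary specialization step matters: one must first choose $x_6,\ldots,x_k$ (resp.\ $x_7,\ldots,x_k$) so that the constants $d_i=\sum_{j\ge 6}f_j\alpha_j^{s_i}x_j^{r_i}-1$ are non-zero, since $d_i=0$ would degenerate the three-variable system; your setup skips this normalization entirely. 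To repair your proof you would essentially have to import the paper's counting argument wholesale, at which point the global dimension-theoretic framing adds nothing.
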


The lemma will be proved in Section \ref{sec:7}. We go on and finish the proof of the theorem under the lemma. We specialize (i) as indicated in  the lemma to get that we may assume that $k=2$, ${\bf X}=(X_1,X_2)$ and
\begin{equation*}
\begin{split}
P_1({\bf X}) & =  f_1\alpha_1^{s_1} X_1^{r_1}+f_2 \alpha_2^{s_1} X_2^{r_1},\\
P_2({\bf X}) & =  f_1\alpha_1^{s_2}X_1^{r_2}+f_2 \alpha_2^{s_2} X_2^{r_2},\\
P_3({\bf X}) & =  f_1 \alpha_1^{s_3} X_1^{r_3}+f_2\alpha_2^{s_3} X_2^{r_3}.
\end{split}
\end{equation*}
Since $P_1,~P_2,~P_3$ are homogeneous, so are $Q_1,Q_2,Q_3$, so we may dehomogenize them, that is, assume that $X_2=1$ and we get some equalities of Laurent-polynomials of one variable. We then write
$R({\bf X})=R(X_1)$ since it no longer depends on $X_2$ after the convention that $X_2=1$.
Let us look at
$$
Q_3(X_1)^2=\frac{P_1(X_1) P_2(X_1)}{P_3(X_1)}.
$$
All roots of $P_i(X_1)$ are of the form $(-f_2/f_1)^{1/r_i} (\alpha_2/\alpha_1)^{s_i/r_i} \zeta_{r_i}$, for $i=1,2,3$, where $\zeta_{r_i}$ runs through a complete system of roots of unity of order $r_i$.
Now every root of $P_3(X_1)$ is either a root of $P_1(X_1)$ or a root of $P_2(X_1)$. Clearly, the number of common roots between $P_3(X_1)$ and $P_j(X_1)$ for $j=1,2$ is
at most $\gcd(r_3,r_j)$ for each of $j=1,2$. Thus, if $r_3>r_2$, then
\begin{itemize}
\item[(i)]  the number of common roots of $P_3(X_1)$ with $P_1(X_1)$ is $\le r_3/2$ with equality if and only if $r_1=r_3/2$;
\item[(ii)] the number of common roots of $P_3(X_1)$ with $P_2(X_1)$ is $\le r_3/2$ with equality if and only $r_2=r_3/2$.
\end{itemize}
Thus, we get
$$
r_3\le r_3/2+r_3/2,
$$
and the inequality is strict unless $r_1=r_2=r_3/2$. But the inequality cannot be strict, so $r_1=r_2=r_3/2$, and this leads to $(r_1,r_2,r_3)=(1,1,2)$, a case which we already saw that it is impossible in the proof of condition
Triangular inequality (T).

Thus, it is not possible that $r_3>r_2$, therefore $r_3=r_2$. Then, since $\operatorname{gcd}(r_1, r_2,$ $r_3) = 1$, we have $r_1<r_3$, so it follows that not all roots of $P_3(X_1)$ are roots of $P_1(X_1)$
by degree considerations.  Hence, $P_3(X_1)$ and $P_2(X_1)$ have a common root. Thus,
$$
(-f_2/f_1)^{1/r_2} (\alpha_2/\alpha_1)^{s_2/r_2} \zeta_{r_3}=(-f_2/f_1)^{1/r_2} (\alpha_2/\alpha_1)^{s_3/r_2}.
$$
 Canceling $(-f_2/f_1)^{1/r_2}$ and raising everything to power $r_2=r_3$, we get \[(\alpha_2/\alpha_1)^{s_3-s_2}=1,\] so $s_3=s_2$, and since also $r_2=r_3$, we get $y=z$, which is not allowed.
\medskip

In case (ii) of the theorem, the identification with Laurent-polynomials (e.g. again via Lemma 2.1 in \cite{cz}) does not work in the above form. But when $\alpha$ is a unit, then we have the relation $\alpha_1 \cdots \alpha_k = \pm 1$, which allows applying a similar identification but instead of ${\bf X}=(X_1,\ldots,X_k)$ by using
\begin{equation}
\label{eq:Xcaseii}
{\bf X}=(X_1,\pm 1/(X_1X_3\cdots X_k), X_3,\ldots,X_k).
\end{equation}
We insist again that by Mignotte's result \cite{MM} there are no other multiplicative relations between $\alpha_1,\ldots,\alpha_k$. In particular, any $k-1$ of these numbers (e.g. $\alpha_1,\alpha_3,\ldots,\alpha_k$) are multiplicatively independent. Hence, we may identify $\ell\mapsto\alpha_1^\ell$ by $X_1$, $\ell\mapsto\alpha_3^\ell$ by $X_3$ and so forth, which implies that $\ell\mapsto \alpha_2^\ell$ must be identified with $\pm 1/(X_1X_3\cdots X_k)$.
By the lemma there is a specialization $(X_3,\ldots,X_k)=(x_3,\ldots,$ $x_k)$ such that upon this specialization,
\begin{equation*}
\begin{split}
P_1(X_1) & =  f_1\alpha_1^{s_1} X_1^{r_1}+f_2\alpha_2^{s_1} X_1^{-r_1},\\
P_2(X_1) & =  f_1\alpha_1^{s_2} X_1^{r_2}+f_2\alpha_2^{s_2} X_1^{-r_2},\\
P_3(X_1) & =  f_1\alpha_1^{s_3} X_1^{r_3} +f_2 \alpha_2^{s_3} X_1^{-r_3}.
\end{split}
\end{equation*}
Factoring out $X_1^{-r_i}$ for $i=1,2,3$ and putting them on the $Q_k(X_1)$ side, and putting
$$
P_i'(X_1)=X_1^{r_i} P_i(X_1)=f_1\alpha_1^{s_i} X_1^{2r_i}+f_2\alpha_2^{s_i},\qquad i=1,2,3,
$$
we end up with
$$
(X_1^{(r_i+r_j-r_k)/2} Q_h(X_1))^2=\frac{P_i'(X_1) P_j'(X_1)}{P_h'(X_1)}
$$
and the left-hand side has no non-zero poles while the right-hand side does not have either a zero or a pole at $X_1=0$, so \[Q_h'(X_1)=X_1^{(r_i+r_j-r_k)/2} Q_h(X_1)\] is a polynomial which is not zero in $X_1=0$.
Note that the exponent $(r_i+r_j-r_h)/2$ of $X_1$ above is an integer by the Parity condition (P).
Now the conclusion can be reached in the same way as before noting that the right-hand side is invariant under $X_1\mapsto -X_1$, therefore so is the left-hand side, showing that $Q_h'(-X_1)=\pm Q_h'(X_1)$. Thus, $Q_h'(X_1)$
either has only monomials of odd degree or only monomials of even degree  but since $Q_h'(0)\not =0$, it follows that all monomials $Q_h'(X_1)$ are of even degree. Thus, $Q_h'(X_1)$ is a polynomial in $X_1^2$ for $h=1,2,3$,
and so are $P_i'(X_1)$ for $i=1,2,3$. Thus, we can make the substitution $X_1^2\mapsto X_1$
and then get to the problem treated in case (i).
\end{proof}

\section{Proof of lemma \ref{le:7}}
\label{sec:7}

In case (i), we take $(X_{7},\ldots,X_k)=(x_{7},\ldots,x_k)$, where these last $k-6$ numbers are algebraic and non-zero but otherwise arbitrary (we can take them rational, like equal to $1$, for example). Of course,
if $k=5$, we do nothing at this stage. Now choose $X_6=x_6$ such that $x_6\ne 0$ and
$$
f_6 \alpha_6^{s_i} x_6^{r_i}+\sum_{j=7}^k f_j \alpha_j^{s_i} x_j^{r_i}\ne 1,\qquad i=1,2,3.
$$
Thus, $x_6$ is chosen to be an algebraic (rational) number outside a finite set. Denote
$$
d_i=f_6 \alpha_6^{s_i} x_6^{r_i}+\sum_{j=7}^k f_j \alpha_j^{s_i} x_j^{r_i}-1\ne 0,\qquad i=1,2,3.
$$
It is now enough to show that there exist $x_3,x_4,x_5$ which are all non-zero and algebraic solving the following system
$$
f_3\alpha_3^{s_i} x_3^{r_i}+f_4\alpha_4^{s_4} x_4^{r_i}+f_5\alpha_5^{s_i} x_5^{r_i}+d_i=0,\qquad i=1,2,3.
$$
In case $r=5$, we have of course $d_i=-1$ for $i=1,2,3$.
We homogenize it by writing $x_i=z_i/z$ for $i=3,4,5$, getting
$$
f_3\alpha_3^{s_i} z_3^{r_i}+f_4\alpha_4^{s_i} z_4^{r_i}+f_5\alpha_5^{s_i} z_5^{r_i}+d_i z^{r_i}=0,\quad i=1,2,3.
$$
The above is a system of $3$ homogeneous polynomial equations in $4$ unknowns. Thus, by B\'ezout's theorem, it either has infinitely many solutions
(that is, if the set of common zeros contains some positive dimensional variety) or if not it has exactly $r_1r_2r_3$ projective solutions counting multiplicities.
Clearly, no point is a multiple solution since the gradient of the $i$th equation above is
$$
r_i(f_3\alpha_3^{s_i} z_3^{r_i-1}, f_4\alpha_4^{s_i} z_4^{r_i-1}, f_5\alpha_5^{s_i} z_5^{r_i-1},d_i z^{r_i-1})
$$
for any $i=1,2,3$, and this cannot be zero for $r_i>0$, since that would imply that $z_3=z_4=z_5=z=0$, but this does not  lead to a point $[z_3,z_4,z_5,z]$  in the projective space ${\mathbb P}_3({\mathbb C})$.
If we can prove that there is a solution with  $z\ne 0$, then we are done since we then can take $x_i=z_i/z$. So, let us assume that all solutions have $z=0$. The argument will be to show
that the number of them is $<r_1r_2r_3$, hence the surface at infinity $z=0$ cannot catch all solutions. So, if $z=0$, we get
$$
f_3 \alpha_3^{s_i} z_3^{r_i}+f_4\alpha_4^{s_i} z_4^{r_i}+f_5\alpha_5^{s_i}z_5^{r_i}=0,\qquad  {\text{\rm for}}\qquad i=1,2,3.
$$
We distinguish two cases.

\medskip

\noindent{\bf Case 1.} There exists $i\in\{3,4,5\}$ such that $z_i=0$.

\medskip

Say $z_5=0$. Then $z_3z_4\ne 0$ (since one of them being zero will imply that the other is also zero). Dehomogenizing and putting $w:=z_3/z_4$, we get
$$
f_3\alpha_3^{s_i} w^{r_i}+f_4\alpha_4^{s_i}=0,\qquad {\text{\rm for}}\qquad i=1,2,3.
$$
The solutions of the $i$th equation above  are $w=(-f_4/f_3)^{1/r_i} (\alpha_4/\alpha_3)^{s_i/r_i} \zeta_{r_i}$ for $i=1,2,3$, where $\zeta_{r_i}$ runs through a complete system of roots of unity of order $r_i$.
Since $\gcd(r_1,r_2,r_3)=1$, there can be at most one common root to all three equations. In fact, if two of the $r_1,r_2,r_3$ are equal then there is no common root. Indeed, say
$r_1=r_2$. Then the relation
$$
(-f_4/f_3)^{1/r_1} (\alpha_4/\alpha_3)^{s_1/r_1}\zeta_{r_1}=(-f_4/f_3)^{1/r_1} (\alpha_4/\alpha)^{s_2/r_1}\zeta_{r_2}
$$
holds for some roots of unity $\zeta_{r_1}, \zeta_{r_2}$ of order $r_1$. Canceling $(-f_4/f_3)^{1/r_1}$ and raising the relation to the power $r_1$, we get $(\alpha_4/\alpha_3)^{s_2-s_1}=1$.
Using Mignotte's result from \cite{MM}, it follows from $\alpha_3^{s_1-s_2} \alpha_4^{s_2-s_1} =1$ that $s_1=s_2$, which together with $r_1=r_2$ implies $x=y$, a contradiction.

Thus, this case can account for at most $3$ zeros and in fact for no zero at all if two of $r_1,r_2,r_3$ are equal.

\medskip

\noindent{\bf Case 2.} $z_i\ne 0$ for $i=3,4,5$.

\medskip

Putting $w_3=z_3/z_5,~w_4=z_4/z_5$, we are searching for solutions to
$$
f_3\alpha_3^{s_i} w_3^{r_i}+f_4\alpha_4^{s_i} w_4^{r_i}+f_5\alpha_5^{s_i}=0,\qquad {\text{\rm for}}\qquad  i=1,2,3.
$$
For each $i$, the above is a curve of degree $r_i$. The polynomial
$$
aX^m+bY^m+c
$$
is irreducible as a polynomial in ${\mathbb C}[X,Y]$ for $abc\ne 0$. In fact, via the birational transformation $(X,Y)\mapsto ((a/c)^{1/m} X,(b/c)^{1/m}Y)$, the above curve becomes associated to
$$
X^m+Y^m+1,
$$
the affine version of $X^m+Y^m+Z^m$, the set of zeros of which is the Fermat curve which is known to be irreducible and of maximal genus $(m-1)(m-2)/2$.
Thus, picking up already the first two curves for $i=1,~2$ of degrees $r_1,~r_2$ we get two irreducible curves. Unless they coincide, they can have at most $r_1r_2$ common points.
In order for them to coincide, we will need, by scaling the coefficients of $w_3^{r_i}$ to be $1$ for $i=1,2$, that $r_1=r_2$ and
$$
\left(\frac{f_4}{f_3} \left(\frac{\alpha_4}{\alpha_3}\right)^{s_1}, \frac{f_5}{f_3} \left(\frac{\alpha_5}{\alpha_3}\right)^{s_1}\right)=\left(\frac{f_4}{f_3} \left(\frac{\alpha_4}{\alpha_3}\right)^{s_2}, \frac{f_5}{f_3} \left(\frac{\alpha_5}{\alpha_3}\right)^{s_2}\right).
$$
This leads to $(\alpha_4/\alpha_3)^{s_2-s_1}=1$, so $s_1=s_2$, and since $r_1=r_2$, we get $x=y$, a contradiction.

\medskip

Having explored Cases 1 and 2, we get that the total number of common zeros with $z=0$ of our equations is at most
$$
3+r_1r_2,
$$
where $3$ does not appear if $r_1,r_2,r_3$ are not all distinct. However, the total number of zeros is at least $r_1r_2r_3$ and $r_1r_2r_3>r_1r_2$, otherwise $r_3=1$ so $(r_1,r_2,r_3)=(1,1,1)$, which is impossible by the
Parity condition (P). Thus, $r_1<r_2<r_3$, and we must have
$$
3+r_1r_2\ge r_1r_2r_3,\quad {\text{\rm so}}\quad 3\ge r_1r_2(r_3-1),
$$
which is impossible because $r_1r_2\ge 2$ and $r_3-1\ge 3-1=2$, so the right-hand side above is larger than $3$.

\medskip

This finishes the lemma in case (i).

\medskip

The proof in case (ii) is similar. In this case, we take arbitrary $(X_8,\ldots,X_k)=(x_8,\ldots,x_k)$ which are not zero, and then $X_7=x_7\ne 0$ such that
$$
f_7 \alpha_7^{s_i} x_7^{r_i}+\sum_{j=8}^k f_j \alpha_j^{s_i} x_j^{r_i}\ne 1,\qquad {\text{\rm for}}\qquad i=1,2,3.
$$
Putting
$$
d_i=f_7 \alpha_7^{s_i} x_7^{r_i}+\sum_{j=8}^k f_j \alpha_j^{s_i} x_j^{r_i} -1\ne 0,\qquad {\text{\rm for}}\qquad i=1,2,3,
$$
we need to show that there are solutions $x_3,x_4,x_5,x_6$ to
$$
f_3 \alpha_3^{s_i} x_3^{r_i}+f_4\alpha_4^{s_i} x_4^{r_i}+f_5\alpha_5^{s_i} x_5^{r_i}+f_6\alpha_6^{s_i}x_6^{r_i}+d_i=0,\qquad i=1,2,3,
$$
together with the additional equation
$$
x_3x_4x_5x_6-\frac{1}{x_7\cdots x_k}=0.
$$
As before, we homogenize it by writing $x_i=z_i/z$ for $i=3,4,5,6$ getting the four equations
\begin{eqnarray*}
f_3 \alpha_3^{s_i} z_3^{r_i}+f_4\alpha_4^{s_i} z_4^{r_i}+f_5\alpha_5^{s_i} z_5^{r_i}+f_6\alpha_6^{s_i}z_6^{r_i}+d_iz^{r_i} & = & 0,\qquad i=1,2,3,\\
z_3z_4z_5z_6 & = & \frac{z^4}{x_7\cdots x_k}.
\end{eqnarray*}
Now B\'ezout's theorem tells us that this system has at least $4r_1r_2r_3$ solutions. Again they are all simple. If $z=0$, then because of the last equation some
other variable should be $0$, so we can take $z_6=0$. Then, we just get
$$
c_3\alpha_3^{s_i} z_3^{r_i}+c_4 \alpha_4^{s_i} z_4^{r_i}+f_5 \alpha_5^{s_i} z_5^{r_i}=0,\qquad {\text{\rm for}}\qquad i=1,2,3,
$$
a case already treated at (i) for which we proved that it has at most $3+r_1r_2$ admissible solutions. Since $4r_1r_2r_3>3+r_1r_2$ (in all cases except $r_1=r_2=r_3=1$ which is not admissible by the Parity condition (P)),
this case also follows. Note that because of the last equation we get that $z\ne 0,$ which implies that all of $x_3,~x_4,~x_5,~x_6$ are non-zero.

The proof of case (ii) with the additional assumption $x_3 x_4 \cdots x_k = -1$ is the same.

\medskip

Theorem \ref{th:main2} now follows.\hfill$\square$

\end{document}